\newtheorem{theorem}{Theorem}[section]
\newtheorem{corollary}[theorem]{Corollary}
\newtheorem{lemma}[theorem]{Lemma}
\newtheorem{proposition}[theorem]{Proposition}
\begin{document}
\title{A characterization of the family of secant lines to a hyperbolic quadric in $PG(3,q)$, $q$ odd}
\author{Puspendu Pradhan \and Bikramaditya Sahu}
\maketitle

\begin{abstract}
We give a combinatorial characterization of the family of lines of $PG(3,q)$ which meet a hyperbolic quadric in two points (the so called secant lines) using their intersection properties with the points and planes of $PG(3,q)$.\\

{\bf Keywords:} Projective space, Hyperbolic quadric, Secant line, Combinatorial characterization\\

{\bf AMS 2010 subject classification:} 05B25, 51E20
\end{abstract}

\section{Introduction}
Throughout, $q$ is a prime power. Let  $PG(3,q)$ denote the three-dimensional Desarguesian projective space defined over a finite field of order $q$. Characterizations of the family of external or secant lines to an ovoid/quadric in $PG(3,q)$ with respect to certain combinatorial properties have been given by several authors. A characterization of the family of secant lines to an ovoid in $PG(3,q)$ was obtained in \cite{Fer-Tal} for $q$ odd and in \cite{de-Res} for $q>2$ even, which was further improved in \cite{DO} for all $q>2$. A characterization of the family of external lines to a hyperbolic quadric in $PG(3,q)$ was given in \cite{DGDO} for all $q$ (also see \cite{IZZ} for a different characterization in terms of a point-subset of the Klein quadric in $PG(5,q)$) and to an ovoid in $PG(3,q)$ was obtained in \cite{DO} for all $q>2$. One can refer to \cite{Bar-But8,Bar-But9, Zan,Zua} for characterizations of external lines in $PG(3,q)$ with respect to quadric cone, oval cone and hyperoval cone. Here we give a characterization of the secant lines with respect to a hyperbolic quadric in $PG(3,q)$, $q$ odd.

Let $\mathcal{Q}$ be a hyperbolic quadric in $PG(3,q)$, that is, a non-degenerate quadric of Witt index two. One can refer to \cite{Hir} for the basic properties of the points, lines and planes of $PG(3,q)$ with respect to $\mathcal{Q}$. Every line of $PG(3,q)$ meets $\mathcal{Q}$ in $0, 1, 2$ or $q+1$ points. A line of $PG(3,q)$ is called {\it secant} with respect to $\mathcal{Q}$ if it meets $\mathcal{Q}$ in 2 points. The lines of $PG(3,q)$ that meet $\mathcal{Q}$ in $q+1$ points are called {\it generators} of $\mathcal{Q}$. Each point of $\mathcal{Q}$ lies on two generators. The quadric $\mathcal{Q}$ consists of $(q+1)^2$ points and $2(q+1)$ generators.

The total number of secant lines of $PG(3,q)$ with respect to $\mathcal{Q}$ is $q^2(q+1)^2/2$. We recall the distribution of secant lines with respect to points and planes of $PG(3,q)$ which plays an important role in this paper. Each point of $\mathcal{Q}$ lies on $q^2$ secant lines. Each point of $PG(3,q)\setminus\mathcal{Q}$ lies on $q(q+1)/2$ secant lines.  Each plane of $PG(3,q)$ contains $q^2$ or $q(q+1)/2$ secant lines. If a plane contains $q^2$ secant lines, then every pencil of lines in that plane contains $0$ or $q$ secant lines. If a plane contains $q(q+1)/2$ secant lines, then every pencil of lines in that plane contains $(q-1)/2, (q+1)/2$ or $q$ secant lines.\medskip

In this paper, we prove the following theorem when $q$ is odd.

\begin{theorem} \label{q-odd}
Let $\mathcal{S}$ be a family of lines of $PG(3,q)$, $q$ odd, for which the following properties are satisfied:
\begin{enumerate}
\item [(P1)] There are $q(q+1)/2$ or $q^2$ lines of $\mathcal{S}$ through a given point of $PG(3,q)$. Further, there exists a point which is contained in $q(q+1)/2$ lines of $\mathcal{S}$ and a point which is contained in $q^2$ lines of $\mathcal{S}$.
\item [(P2)] Every plane $\pi$ of $PG(3,q)$ contains $q(q+1)/2$ or $q^2$ lines of $\mathcal{S}$. Further,
\begin{enumerate}
\item[(P2a)] if $\pi$ contains $q^2$ lines of $\mathcal{S}$, then every pencil of lines in $\pi$ contains $0$ or $q$ lines of $\mathcal{S}$.
\item[(P2b)] if $\pi$ contains $q(q+1)/2$ lines of $\mathcal{S}$, then every pencil of lines in $\pi$ contains $(q-1)/2, (q+1)/2$ or $q$ lines of $\mathcal{S}$.
\end{enumerate}
\end{enumerate}
Then either $\mathcal{S}$ is the set of all secant lines with respect to a hyperbolic quadric in $PG(3,q)$, or the set of points each of which is contained in $q^2$ lines of $\mathcal{S}$ form a line $l$ of $PG(3,q)$ and $\mathcal{S}$ is a hypothetical family of $\dfrac{q^4+q^3+2q^2}{2}$ lines of $PG(3,q)$ not containing $l$.
\end{theorem}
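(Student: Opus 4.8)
The plan is to read off from $\mathcal{S}$ two partitions of the objects of $PG(3,q)$ and then reconstruct the quadric from the resulting local structure. Call a point \emph{heavy} if it lies on $q^2$ lines of $\mathcal{S}$ and \emph{light} if it lies on $q(q+1)/2$; by (P1) every point is of exactly one type and both types occur. Similarly call a plane \emph{heavy} or \emph{light} according as it contains $q^2$ or $q(q+1)/2$ lines of $\mathcal{S}$. Writing $n$ for the number of heavy points and $n'$ for the number of heavy planes, a double count of the incident pairs (point, line of $\mathcal{S}$) and (plane, line of $\mathcal{S}$)---each line carrying $q+1$ points and lying in $q+1$ planes---gives two expressions for $(q+1)|\mathcal{S}|$, whence
\[ q^2 n + \tfrac{q(q+1)}2\,(q^3+q^2+q+1-n)=q^2 n'+\tfrac{q(q+1)}2\,(q^3+q^2+q+1-n'), \]
and since $q^2\neq q(q+1)/2$ this forces $n=n'$ and expresses $|\mathcal{S}|$ as a linear function of $n$. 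Crucially, this counting alone does \emph{not} pin down $n$: both $n=(q+1)^2$ (giving $|\mathcal{S}|=q^2(q+1)^2/2$) and $n=q+1$ (giving $|\mathcal{S}|=(q^4+q^3+2q^2)/2$) are consistent, which is exactly the origin of the two alternatives in the statement. The real task is therefore to extract enough \emph{geometric} structure to decide $n$ and the shape of the heavy set.

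Next I would analyse the two kinds of planes. For a heavy plane $\pi$, property (P2a) says each pencil contains $0$ or $q$ lines of $\mathcal{S}$; since $\pi$ carries $q^2$ lines of $\mathcal{S}$, the $q+1$ lines of $\pi$ not in $\mathcal{S}$ are, point by point, forced to pass through a single common point, so that $\mathcal{S}\cap\pi$ is exactly the set of lines of $\pi$ avoiding a unique \emph{vertex} $v_\pi$ (the unique point of $\pi$ with empty pencil). For a light plane $\pi$, property (P2b) gives pencil sizes in $\{(q-1)/2,(q+1)/2,q\}$; let $c,e,i$ be the numbers of points of $\pi$ with pencil $q,(q-1)/2,(q+1)/2$. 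Counting points of $\pi$, incident pairs (point, line of $\mathcal{S}$ in $\pi$), and coplanar pairs of lines of $\mathcal{S}$ in $\pi$ (each pair meeting in a unique point of $\pi$) yields
\[ c+e+i=q^2+q+1, \]
\[ cq+e\tfrac{q-1}2+i\tfrac{q+1}2=\tfrac{q(q+1)^2}2, \]
\[ c\tbinom q2+e\tbinom{(q-1)/2}2+i\tbinom{(q+1)/2}2=\tbinom{q(q+1)/2}2. \]
Because the pencil values $q,(q-1)/2,(q+1)/2$ are distinct, this system is of Vandermonde type, hence nonsingular, and forces $(c,e,i)=\bigl(q+1,\,q(q+1)/2,\,q(q-1)/2\bigr)$ in \emph{every} light plane. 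A short further incidence count shows that no line of $\pi$ carries three $q$-pencil points, so these $q+1$ points form an oval; as $q$ is odd, Segre's theorem makes it a conic $\mathcal{C}_\pi$, and one checks that $\mathcal{S}\cap\pi$ is precisely the set of secants of $\mathcal{C}_\pi$.

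With the local pictures in hand I would reconstruct the global heavy set $\mathcal{B}$. Define a line $m$ to be a \emph{generator candidate} if every plane through $m$ is heavy. Since a conic contains no line, a generator candidate lies in no light plane, and $\mathcal{B}$ is built from the vertices $v_\pi$ of heavy planes and the conic points of light planes. The aim is to show that the generator candidates, together with the conics $\mathcal{C}_\pi$, fit together so that $\mathcal{B}$ meets every plane in a conic (light planes) or a pair of lines through its vertex (heavy planes); this identifies $\mathcal{B}$ as a hyperbolic quadric whose two reguli are the generator candidates, forcing $n=(q+1)^2$ and $\mathcal{S}$ to be its secant lines. The degenerate alternative is that the local line-pairs collapse, no genuine regulus survives, and the heavy points instead all lie on a single line $l$; this is the case $n=q+1$ and the hypothetical family of $(q^4+q^3+2q^2)/2$ lines.

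The main obstacle is precisely this global gluing. The per-plane description of $\mathcal{S}$ is \emph{identical} in the two alternatives---heavy planes are cones over a vertex and light planes are conic-secant systems in both cases---so the dichotomy cannot be detected inside any single plane and must come from the way the heavy points of different planes are forced to agree. Concretely, the hard step is to prove that a point which is conic-type in one light plane is heavy (that is, $q$-pencil) in \emph{every} light plane through it, and to control the distribution of the heavy-plane vertices $v_\pi$; establishing this consistency is what rules out all intermediate values of $n$ and leaves only the quadric and the single-line configurations. I expect this to require a careful analysis of the pencils of a fixed point across all $q^2+q+1$ planes through it, combined with the global relation $\sum_P t_P=n$ for the heavy-plane vertices (each heavy plane contributing its unique vertex), rather than any further elementary averaging.
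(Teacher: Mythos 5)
Your per-plane analysis is sound, and in one place nicer than the paper's: the double count giving (number of heavy points) $=$ (number of heavy planes), the vertex structure of heavy planes, and the Vandermonde-type determination $(c,e,i)=\bigl(q+1,\,q(q+1)/2,\,q(q-1)/2\bigr)$ in light planes are all correct; that last computation is a cleaner route to $|\gamma(\pi)|=q+1$ than the paper's chain of Lemmas \ref{points-l}--\ref{lem-oval}. But your write-up stops exactly where the theorem begins. The entire global part --- deciding the structure of the set of heavy points and deriving the dichotomy --- is only announced as an ``aim,'' and you yourself flag it as the missing hard step. That part is the actual content of the paper (its Sections 3 and 4), and none of its ingredients appear in your proposal: you never prove the \emph{local} version of your counting identity (the paper's Lemma \ref{lem-plane-point}: for every line $l$, the number of heavy planes through $l$ equals the number of heavy points on $l$ --- obtained by the same double count you use globally, restricted to the pencil of planes through $l$, and it is the engine of everything afterwards); hence you have no classification of lines by heavy-point content (every line carries $0,1,2$ or $q+1$ heavy points, and one with exactly $2$ lies in $\mathcal{S}$, Lemma \ref{0-1-2-q+1}), no constancy of the number of heavy points per plane type with the relation $\mu=\lambda+q$, no Bose--Burton argument forcing the heavy set of a heavy plane to contain a line, no case split ``one line versus two intersecting lines'' (Lemma \ref{to-use}) --- which is precisely where the two alternatives of the theorem come from --- and no generalized-quadrangle argument (order $(q,1)$, via \cite[4.4.8]{PT}) assembling the two-line configurations into a hyperbolic quadric.

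Moreover, the specific consistency statement you single out as the hard step --- that a point lying on the conic of one light plane is heavy, and is of conic type in every light plane through it --- is not merely unproven: it is \emph{false} in one of the two admissible configurations, so no proof of it can exist. In the degenerate alternative each light plane still carries an oval of $q+1$ points of pencil size $q$, but only $\lambda=1$ of them is a heavy point (from $\mu=\lambda+q$ with $\mu=q+1$, the tangent-plane heavy set being a line). The correct statement, and all the paper proves, is the one-way containment that heavy points of a light plane lie on its oval (Lemma \ref{lem-black}). Since your proposed step would rule out the second alternative of the theorem rather than isolate it, the sketched route cannot close the argument as described; the dichotomy has to be produced by classifying the heavy set inside tangent planes, not by forcing conic points to be heavy.
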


\section{Combinatorial results}

Let $\mathcal{S}$ be a set of lines of $PG(3,q)$ for which the properties (P1), (P2), (P2a) and (P2b) stated in Theorem \ref{q-odd} hold. A plane of $PG(3,q)$ is said to be {\it tangent} or {\it secant} according as it contains $q^2$ or $\frac{q(q+1)}{2}$ lines of $\mathcal{S}$. For a given plane $\pi$ of $PG(3,q)$, we denote by $\mathcal{S}_\pi$ the set of lines of $\mathcal{S}$ which are contained in $\pi$. By property (P2), $\left|\mathcal{S}_\pi\right|=q^2$ or $\frac{q(q+1)}{2}$ according as $\pi$ is a tangent plane or not. 

We first show that both tangent and secant planes exist. We call a point of $PG(3,q)$ {\it black} if it is contained in $q^2$ lines of $\mathcal{S}$.

\begin{lemma}\label{lem-plane-point}
Let $l$ be a line of $PG(3,q)$. Then the number of tangent planes through $l$ is equal to the number of black points contained in $l$.
\end{lemma}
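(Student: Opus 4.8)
The plan is to prove the equality $b = t$, where $b$ denotes the number of black points on $l$ and $t$ the number of tangent planes through $l$, by a single double count. The object I would count is the number $N$ of triples $(P,\pi,s)$ in which $P$ is a point of $l$, $\pi$ is a plane containing $l$, and $s$ is a line of $\mathcal{S}$ with $P\in s\subseteq\pi$; equivalently, $N$ is the sum, over all $(q+1)^2$ flags $(P,\pi)$ with $P\in l\subseteq\pi$, of the number of lines of $\mathcal{S}$ lying in the pencil of $\pi$ centred at $P$. The whole argument rests on evaluating this sum in the two natural orders, so I would first settle one bookkeeping point: whether $l$ itself belongs to $\mathcal{S}$. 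I would record this with an indicator $\delta\in\{0,1\}$ equal to $1$ exactly when $l\in\mathcal{S}$.

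Fixing a point $P$ on $l$ and summing the pencil counts over the $q+1$ planes through $l$: every line of $\mathcal{S}$ through $P$ other than $l$ spans with $l$ a unique plane and so is counted once, whereas $l$ (when $\delta=1$) lies in all $q+1$ of these planes. Writing $m_P$ for the number of lines of $\mathcal{S}$ through $P$, the partial sum at $P$ therefore equals $m_P+q\delta$. By (P1) we have $m_P=q^2$ or $m_P=q(q+1)/2$ according as $P$ is black or not, and $l$ carries exactly $b$ black points, so summing over the $q+1$ points of $l$ gives $N = bq^2 + (q+1-b)\,q(q+1)/2 + (q+1)q\delta$.

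Dually, fixing a plane $\pi$ through $l$ and summing the pencil counts over the $q+1$ points of $l$: in the projective plane $\pi$ every line of $\mathcal{S}_\pi$ distinct from $l$ meets $l$ in a unique point and is counted once, whereas $l$ (when $\delta=1$) passes through all $q+1$ points of $l$. Hence the partial sum at $\pi$ equals $|\mathcal{S}_\pi|+q\delta$. By (P2), $|\mathcal{S}_\pi|=q^2$ or $q(q+1)/2$ according as $\pi$ is tangent or secant, and exactly $t$ of the planes through $l$ are tangent, so summing over the $q+1$ planes gives $N = tq^2 + (q+1-t)\,q(q+1)/2 + (q+1)q\delta$.

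Equating the two expressions, the terms $(q+1)q\delta$ cancel, so the final identity does not depend on whether $l\in\mathcal{S}$; this cancellation is the one place where a difficulty could arise, and it resolves itself automatically through the uniform treatment of $l$ via $\delta$. After simplification the identity reduces to $(b-t)\bigl(q^2-q(q+1)/2\bigr)=0$, that is $(b-t)\,q(q-1)/2=0$, and since $q$ is odd we have $q\ge 3$, so the factor $q(q-1)/2$ is nonzero and $b=t$, as required. Beyond the careful accounting of $l$ itself, the remainder is the standard pencil double count in $PG(3,q)$, and I would not expect any further obstacle.
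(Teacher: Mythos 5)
Your proof is correct and follows essentially the same route as the paper: a double count of the incidences between lines of $\mathcal{S}$ meeting $l$, the points of $l$, and the planes through $l$, using (P1) and (P2) to reach $(b-t)\,q(q-1)/2=0$. The only difference is cosmetic: your indicator $\delta$ merges into a single computation the two cases ($l\in\mathcal{S}$ and $l\notin\mathcal{S}$) that the paper treats separately, with the $\delta$-terms cancelling exactly as the paper's case distinction shows.
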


\begin{proof}
Let $t$ and $b$, respectively, denote the number of tangent planes through $l$ and the number of black points contained in $l$.  We count in two different ways the total number of lines of $\mathcal{S}\setminus\{l\}$ meeting $l$. Any line of $\mathcal{S}$ meeting $l$ is contained in some plane through $l$. If $l\in\mathcal{S}$, then we get
$$t(q^2-1)+(q+1-t)\left(\frac{q(q+1)}{2}-1\right)=b(q^2-1)+(q+1-b)\left(\frac{q(q+1)}{2}-1\right).$$
If $l\notin \mathcal{S}$, then we get
$$tq^2+(q+1-t)\frac{q(q+1)}{2}=bq^2+(q+1-b)\frac{q(q+1)}{2}.$$
In both cases, it follows that $(t-b)\frac{q^2 -q}{2}=0$ and hence $t=b$.
\end{proof}

\begin{corollary}
Both tangent and secant planes exist.
\end{corollary}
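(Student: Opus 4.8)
The plan is to derive both existence statements directly from Lemma \ref{lem-plane-point} by choosing judicious lines through the two kinds of points guaranteed by (P1). Recall that (P1) furnishes both a black point $P$ (lying on $q^2$ lines of $\mathcal{S}$) and a non-black point $W$ (lying on $q(q+1)/2$ lines of $\mathcal{S}$); these are genuinely distinct types, since $q(q+1)/2 \neq q^2$ for odd $q \geq 3$. The lemma converts the combinatorial count of tangent planes through a line into the count of black points on that line, so the whole argument reduces to controlling how many black points lie on a cleverly chosen line.

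For the existence of a tangent plane, I would take any line $l$ passing through the black point $P$. Then $l$ carries at least one black point, so in the notation of the lemma $b \geq 1$, and hence the number $t$ of tangent planes through $l$ satisfies $t = b \geq 1$. In particular at least one tangent plane exists.

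For the existence of a secant plane, I would instead take any line $l$ through the non-black point $W$. Since $W$ is not black, $l$ contains at most $q$ black points, so $b \leq q$ and therefore $t = b \leq q$ by the lemma. As there are exactly $q+1$ planes through $l$, the number of secant planes through $l$ equals $q+1-t \geq 1$, and so a secant plane exists.

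The only point requiring any care is confirming that ``black'' and ``non-black'' are truly separate categories, i.e.\ that the two values appearing in (P1) differ; this is exactly where the hypothesis that $q$ is odd is used. Beyond that I do not anticipate a substantive obstacle, as the essential counting has already been packaged inside Lemma \ref{lem-plane-point}, and both halves of the corollary follow by a one-line application of it.
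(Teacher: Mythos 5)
Your proposal is correct and takes essentially the same approach as the paper: both arguments are immediate applications of Lemma \ref{lem-plane-point} combined with (P1), the only (immaterial) difference being that the paper applies the lemma once to the single line joining the black point $x$ to the non-black point $y$ (so that $1 \le t = b \le q$ gives both conclusions simultaneously), while you apply it to two separate lines. One minor correction: the inequality $q(q+1)/2 \neq q^2$ holds for every prime power $q$ (equality would force $q=1$), so the separation of the two point types in (P1) does not depend on $q$ being odd; oddness is needed elsewhere, e.g.\ for the values $(q\pm 1)/2$ in (P2b) to be integers.
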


\begin{proof}
By property (P1), let $x$ (respectively, $y$) be a point of $PG(3,q)$ which is contained in $q^2$ (respectively, $\frac{q(q+1)}{2}$) lines of $\mathcal{S}$. Taking $l$ to be the line through $x$ and $y$, the corollary follows from Lemma \ref{lem-plane-point} using the facts that $x$ is a black point but $y$ is not a black point.
\end{proof}

As a consequence of Lemma \ref{lem-plane-point}, we have the following.

\begin{corollary}\label{cor-plane}
Every line of a tangent plane contains at least one black point.
\end{corollary}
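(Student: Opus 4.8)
The plan is to deduce this immediately from Lemma \ref{lem-plane-point}, which already contains all the combinatorial work. Let $\pi$ be any tangent plane, and let $l$ be an arbitrary line contained in $\pi$. I would first observe that $\pi$ is itself a tangent plane passing through $l$, so the number of tangent planes through $l$ is at least $1$.

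Next I would apply Lemma \ref{lem-plane-point} to the line $l$: the number of black points contained in $l$ equals the number of tangent planes through $l$. Since we have just exhibited at least one such tangent plane, namely $\pi$, the number of black points on $l$ is at least $1$. This gives the claim, since $l$ was an arbitrary line of the tangent plane $\pi$.

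I do not expect any genuine obstacle here, as the corollary is a direct reinterpretation of the equality $t = b$ established in Lemma \ref{lem-plane-point}. The only thing to get right is the logical direction: a line of a tangent plane automatically has that plane as a witness forcing $t \geq 1$, and the lemma then transfers this lower bound to the number $b$ of black points. The result is therefore essentially a one-line consequence once the lemma is in hand.
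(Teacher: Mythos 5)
Your proposal is correct and is exactly the argument the paper intends: the corollary is stated as an immediate consequence of Lemma \ref{lem-plane-point}, since any line $l$ of a tangent plane $\pi$ has $\pi$ itself as a witness giving $t\geq 1$, whence $b=t\geq 1$. No issues.
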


\begin{corollary}\label{cor-plane-1}
Every black point is contained in some tangent plane.
\end{corollary}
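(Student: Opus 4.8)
The plan is to obtain this as an immediate consequence of Lemma \ref{lem-plane-point}. Let $x$ be a black point of $PG(3,q)$. The guiding observation is that any line through $x$ already contains at least one black point, namely $x$ itself, so Lemma \ref{lem-plane-point} can be invoked to force the existence of a tangent plane.

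Concretely, I would first choose an arbitrary line $l$ of $PG(3,q)$ passing through $x$. Since $x$ lies on $l$ and is black, the number of black points contained in $l$ is at least $1$. By Lemma \ref{lem-plane-point}, the number of tangent planes through $l$ equals this number of black points, and is therefore at least $1$. Hence some tangent plane $\pi$ passes through $l$; as $x \in l \subseteq \pi$, the point $x$ lies in the tangent plane $\pi$, which is exactly the assertion to be proved.

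I expect no genuine obstacle here: the statement is essentially a reformulation of the inequality contained in Lemma \ref{lem-plane-point}. The only point worth flagging is the elementary but crucial remark that a black point is automatically counted as a black point on every line through it, which is precisely what guarantees that the count of tangent planes through such a line is nonzero.
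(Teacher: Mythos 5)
Your proof is correct and matches the paper's intent exactly: the paper states this corollary (without explicit proof) as a direct consequence of Lemma \ref{lem-plane-point}, and your argument---taking any line $l$ through the black point $x$ and noting that $l$ then carries at least one black point, hence at least one tangent plane---is precisely that consequence spelled out.
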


\subsection{Tangent planes}

Note that, by property (P2a), each point of a tangent plane $\pi$ is contained in no line or $q$ lines of $\mathcal{S}_{\pi}$.

\begin{lemma}\label{size-a_pi}
Let $\pi$ be a tangent plane. Then there are $q^2+q$ points of $\pi$, each of which is contained in $q$ lines of $\mathcal{S}_{\pi}$. Equivalently, there is only one point of $\pi$ which is contained in no line of $\mathcal{S}_{\pi}$.
\end{lemma}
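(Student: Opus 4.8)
The plan is to prove this by a straightforward double count of the incidences between the points of $\pi$ and the lines of $\mathcal{S}_\pi$. First I would record the consequence of property (P2a) already noted before the statement: reading each pencil as the set of $q+1$ lines of $\pi$ through a fixed point, every point of $\pi$ lies on exactly $0$ or $q$ lines of $\mathcal{S}_\pi$. Let $a$ denote the number of points of $\pi$ lying on $q$ lines of $\mathcal{S}_\pi$; then the remaining $q^2+q+1-a$ points of $\pi$ lie on none.

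Next I would count the flags $(P,m)$ with $P$ a point of $\pi$, $m\in\mathcal{S}_\pi$, and $P\in m$, in two ways. Summing over the lines: since $\pi$ is a tangent plane we have $|\mathcal{S}_\pi|=q^2$, and each such line carries $q+1$ points of $\pi$, so the flag count equals $q^2(q+1)$. Summing over the points: each of the $a$ points on $q$ lines contributes $q$ flags and every other point contributes none, giving $aq$. Equating the two expressions yields $aq=q^2(q+1)$, hence $a=q^2+q$. Consequently the number of points of $\pi$ lying on no line of $\mathcal{S}_\pi$ is $(q^2+q+1)-(q^2+q)=1$, which is exactly the equivalent formulation.

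There is no real obstacle here; the argument reduces to a single incidence count once the $0$-or-$q$ dichotomy coming from (P2a) is in hand. The only matters requiring care are the bookkeeping facts that a line of $PG(3,q)$ contains exactly $q+1$ points, that $|\mathcal{S}_\pi|=q^2$ precisely because $\pi$ is tangent, and that (P2a) is genuinely a per-point statement, i.e. the pencils of $\pi$ are exactly its point-pencils, so that each point contributes a well-defined $0$ or $q$ to the sum.
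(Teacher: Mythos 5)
Your proof is correct and is essentially the same as the paper's: both arguments use the $0$-or-$q$ dichotomy from (P2a) and then double-count the incidences between points of $\pi$ and the $q^2$ lines of $\mathcal{S}_\pi$, obtaining $aq = q^2(q+1)$ and hence $a = q^2+q$. The paper merely phrases the count via the set $X=\{(x,l): x\in B_\pi,\ l\in\mathcal{S}_\pi,\ x\in l\}$, which is your flag count restricted to the points that contribute nonzero terms.
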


\begin{proof}
Let $A_\pi$ (respectively, $B_\pi$) be the set of points of $\pi$ each of which is contained in no line (respectively, $q$ lines) of $\mathcal{S}_\pi$. Then $|A_\pi| +|B_\pi| =q^2+q+1$. We show that $|A_\pi| =1$ and $|B_\pi| =q^2+q$.

Observe that if $l$ is a line of $\mathcal{S}_\pi$, then each of the $q+1$ points of $l$ lies on $q$ lines of $\mathcal{S}_\pi$ by property (P2a) and hence is contained in $B_{\pi}$. Consider the following set of point-line pairs:
$$X=\{(x,l):x \in B_\pi , l\in \mathcal{S}_\pi, x\in l \}.$$
Counting $|X|$ in two ways, we get $|B_\pi|\times q=|X|=q^2\times (q+1)$. This gives $|B_\pi|=q^2+q$ and hence $|A_\pi|=1$.
\end{proof}

For a tangent plane $\pi$, there is a unique point of $\pi$ which is contained in no line of $\mathcal{S}_{\pi}$ by Lemma \ref{size-a_pi}. We denote this unique point by $p_\pi$  and call it the {\it pole} of $\pi$.

\begin{corollary}\label{cor-pole}
Let $\pi$ be a tangent plane. Then the $q+1$ lines of $\pi$ not contained in $\mathcal{S}_\pi$ are precisely the lines of $\pi$ through the pole $p_\pi$.
\end{corollary}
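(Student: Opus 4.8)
The plan is to establish the claimed set equality by a short counting argument combined with the defining property of the pole, so I would not expect any serious technical difficulty here.

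First I would count the lines of $\pi$ not contained in $\mathcal{S}_\pi$. Since $\pi$ is a projective plane of order $q$, it contains exactly $q^2+q+1$ lines, and because $\pi$ is a tangent plane we have $|\mathcal{S}_\pi|=q^2$ by property (P2). Hence the number of lines of $\pi$ that do not belong to $\mathcal{S}_\pi$ is exactly $(q^2+q+1)-q^2=q+1$.

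Next I would invoke the crucial observation coming from the definition of the pole. By Lemma \ref{size-a_pi}, the point $p_\pi$ lies on no line of $\mathcal{S}_\pi$; equivalently, every line of $\pi$ passing through $p_\pi$ fails to lie in $\mathcal{S}_\pi$. Since there are exactly $q+1$ lines of $\pi$ through the point $p_\pi$, these $q+1$ lines are all among the lines of $\pi$ not contained in $\mathcal{S}_\pi$.

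Finally I would match cardinalities to close the argument. The $q+1$ lines through $p_\pi$ form a subset of the $q+1$ lines of $\pi$ not in $\mathcal{S}_\pi$, and since both collections have the same size $q+1$, they must coincide. The main (and essentially only) input is the characterization of the pole from the preceding lemma; once that is in hand, the conclusion is forced purely by the cardinality count, so I do not anticipate any real obstacle in carrying this out.
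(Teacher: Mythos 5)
Your proof is correct and is precisely the intended argument: the paper states this as an immediate corollary of Lemma \ref{size-a_pi}, relying on exactly the counting you give ($q^2+q+1-q^2=q+1$ lines outside $\mathcal{S}_\pi$, all $q+1$ lines through $p_\pi$ among them, hence equality by cardinality). No gaps; nothing to add.
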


\subsection{Secant planes}

By property (P2b), each point of a secant plane $\pi$ is contained in $\frac{q-1}{2}, \frac{q+1}{2}$ or $q$ lines of $\mathcal{S}_{\pi}$. For a secant plane $\pi$, we denote by $\alpha(\pi)$, $\beta(\pi)$ and $\gamma(\pi)$ the set of points of $\pi$ which are contained in $\frac{q-1}{2}$, $\frac{q+1}{2}$ and $q$ lines of $\mathcal{S}_\pi$, respectively. Similarly, for a line $l$ of a secant plane $\pi$, we denote by $\alpha(l)$, $\beta(l)$ and $\gamma(l)$ the set of points of $l$ which are contained in $\frac{q-1}{2}, \frac{q+1}{2}$ and $q$ lines of $\mathcal{S}_\pi$, respectively. We have $\alpha(l)=l\cap \alpha(\pi)$, $\beta(l)= l\cap \beta(\pi)$ and $\gamma(l)=l\cap\gamma(\pi)$.

\begin{lemma}\label{points-l}
Let $\pi$ be a secant plane and $l$ be a line of $\pi$. Then the following hold:
\begin{enumerate}
\item [(i)] If $l\in \mathcal{S}_\pi$, then $(|\alpha (l)|,|\beta (l)|,|\gamma (l)|)=\left(\frac{q-1}{2},\frac{q-1}{2},2\right)$ or $(0,q,1)$.
\item [(ii)] If $l\notin \mathcal{S}_\pi$, then $(|\alpha (l)|,|\beta (l)|,|\gamma (l)|)=\left(\frac{q+1}{2},\frac{q+1}{2},0\right)$ or $(q,0,1)$.
\end{enumerate}
\end{lemma}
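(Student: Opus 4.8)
The plan is to prove both parts by a single double counting of the incidences between the points of $l$ and the lines of $\mathcal{S}_\pi$ passing through them. Set $a=|\alpha(l)|$, $b=|\beta(l)|$ and $c=|\gamma(l)|$. Since $\pi$ is a secant plane, property (P2b) guarantees that every point of $\pi$---in particular every one of the $q+1$ points of $l$---lies on exactly $\frac{q-1}{2}$, $\frac{q+1}{2}$ or $q$ lines of $\mathcal{S}_\pi$, so the points of $l$ are partitioned by type and we obtain the first relation $a+b+c=q+1$.

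Next I would count the set $Y=\{(x,m):x\in l,\ m\in\mathcal{S}_\pi,\ x\in m\}$ in two ways. Summing over the points $x\in l$ and using the definition of the three types gives $|Y|=\frac{q-1}{2}a+\frac{q+1}{2}b+qc$. Summing instead over the lines $m\in\mathcal{S}_\pi$, each $m$ contributes $|m\cap l|$; as two distinct lines of a plane meet in exactly one point, this contribution equals $1$ for every $m\neq l$ and equals $q+1$ for $m=l$. Recalling that $|\mathcal{S}_\pi|=\frac{q(q+1)}{2}$ because $\pi$ is secant, this yields $|Y|=\frac{q(q+1)}{2}$ when $l\notin\mathcal{S}_\pi$, and $|Y|=\frac{q(q+1)}{2}+q$ when $l\in\mathcal{S}_\pi$ (the extra $q$ arising because $l$ contributes $q+1$ rather than $1$).

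Combining the two expressions for $|Y|$ with the relation $a+b+c=q+1$ and simplifying, I expect to arrive at the single linear constraint $qc+b-a=2q$ in case (i) and $qc+b-a=0$ in case (ii). In either case I would then solve this constraint together with $a+b+c=q+1$, expressing $a$ and $b$ as linear functions of the integer $c$; imposing $a\geq 0$ and $b\geq 0$ cuts the admissible values down to $c\in\{1,2\}$ in case (i), producing $\left(\frac{q-1}{2},\frac{q-1}{2},2\right)$ and $(0,q,1)$, and to $c\in\{0,1\}$ in case (ii), producing $\left(\frac{q+1}{2},\frac{q+1}{2},0\right)$ and $(q,0,1)$.

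There is no serious obstacle here, as the whole argument is a clean two-way count. The only steps needing care are the bookkeeping in the case $l\in\mathcal{S}_\pi$, where $l$ meets itself in all $q+1$ of its points rather than in a single point and therefore changes the line-count of $|Y|$ by exactly $q$, and the final non-negativity analysis, which must confirm that the listed distributions are the only ones and that all other integer values of $c$ are excluded.
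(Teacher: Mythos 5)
Your proof is correct, and at its core it is the same double count as the paper's: the paper counts the lines of $\mathcal{S}_\pi\setminus\{l\}$ meeting $l$ (so $l$'s own contribution is subtracted on the point side), while you count incidence pairs $(x,m)$ with $x\in l$, $m\in\mathcal{S}_\pi$ (so $l$'s contribution of $q+1$ appears on the line side); these are the same equation written two ways, and your bookkeeping of the extra $q$ in case (i) is exactly right. Where the two arguments genuinely part ways is in the endgame. The paper eliminates $|\gamma(l)|$, arrives at $|\alpha(l)|\frac{q+1}{2}=(q-|\beta(l)|)\frac{q-1}{2}$ (and its analogue in case (ii)), and then invokes coprimality of the consecutive integers $\frac{q-1}{2}$ and $\frac{q+1}{2}$ to get a congruence condition on $|\beta(l)|$, which the range $0\le|\beta(l)|\le q+1$ cuts down to two values. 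You instead keep $c=|\gamma(l)|$ as the free parameter, solve the $2\times 2$ linear system for $a=\frac{(q-1)(c-1)}{2}$, $b=\frac{3q+1-c(q+1)}{2}$ (case (i)) and $a=\frac{q+1+c(q-1)}{2}$, $b=\frac{(q+1)(1-c)}{2}$ (case (ii)), and let non-negativity of $a$ and $b$ force $c\in\{1,2\}$ and $c\in\{0,1\}$ respectively; one should note in passing that integrality of $a,b$ is automatic since $q$ is odd. Your finish is slightly more elementary -- no divisibility argument at all -- and it makes visible why exactly two distributions survive in each case; the paper's congruence argument is comparably short but rests on the number-theoretic observation. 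Both are complete proofs.
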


\begin{proof}
We have $|\alpha(l)|+|\beta(l)|+|\gamma(l)|=q+1$, that is, $|\gamma(l)|=q+1- |\alpha(l)|-|\beta(l)|$.
We first assume that $l \in \mathcal{S}_\pi$. Counting the total number of lines of $\mathcal{S}_\pi\setminus\{l\}$ meeting $l$, we get
$$|\alpha(l)|\left(\frac{q-1}{2}-1\right)+|\beta(l)|\left(\frac{q+1}{2}-1\right)+|\gamma(l)|(q-1)=
|\mathcal{S}_\pi|-1=\frac{q(q+1)}{2}-1.$$
This gives
\begin{equation}\label{eq-1}
|\alpha(l)|\left(\frac{q-3}{2}\right)+|\beta(l)|\left(\frac{q-1}{2}\right)+|\gamma(l)|(q-1)=\frac{q^2+q-2}{2}.
\end{equation}
Putting the value of $|\gamma(l)|$ in equation (\ref{eq-1}), we thus get
$$|\alpha(l)|\left(\frac{q+1}{2}\right)+|\beta(l)|\left(\frac{q-1}{2}\right)=\frac{q(q-1)}{2},$$
that is,
$$|\alpha(l)|\left(\frac{q+1}{2}\right)=\left(q-|\beta(l)|\right)\left(\frac{q-1}{2}\right).$$
Since $\frac{q+1}{2}$ and $\frac{q-1}{2}$ are co-prime (being consecutive integers), it follows that $\frac{q+1}{2}$ must divide $q-|\beta(l)|$ and so
$$|\beta(l)| \equiv q \mod \frac{q+1}{2}.$$
Since $0\leq |\beta(l)|\leq q+1$, we have $|\beta(l)|=\frac{q-1}{2}$ or $q$. Consequently, $(|\alpha(l)|,|\beta(l)|,|\gamma(l)|)=\left(\frac{q-1}{2},\frac{q-1}{2},2\right)$ or $(0,q,1)$. This proves (i).

Now assume that $l\notin \mathcal{S}_\pi$. If $|\beta(l)|=q+1$, then the numbers lines of $\mathcal{S}$ which are contained in $\pi$ would be $(q+1)\left(\frac{q+1}{2}\right)$ which is greater than $|\mathcal{S}_\pi | =\frac{q(q+1)}{2}$, a contradiction. So $0\leq |\beta(l)|\leq q$. Counting the total number of lines of $\mathcal{S}_\pi\setminus\{l\}$ meeting $l$, we get
\begin{equation}\label{eq-3}
|\alpha(l)|\left(\frac{q-1}{2}\right)+|\beta(l)|\left(\frac{q+1}{2}\right)+|\gamma(l)|q=|\mathcal{S}_\pi|=\frac{q(q+1)}{2}.
\end{equation}
Putting the value of $|\gamma(l)|$ in equation (\ref{eq-3}), we get
$$|\alpha(l)|\left(\frac{q+1}{2}\right)+|\beta(l)|\left(\frac{q-1}{2}\right)=\frac{q(q+1)}{2},$$
that is,
$$|\beta(l)|\left(\frac{q-1}{2}\right)=(q-|\alpha(l)|)\left(\frac{q+1}{2}\right).$$
If $|\alpha(l)|=q$, then $|\beta(l)|=0$ and so $(|\alpha (l)|,|\beta (l)|,|\gamma (l)|)=(q,0,1)$. Suppose that $|\alpha(l)|\neq q$.
Since the integers $\frac{q+1}{2}$ and $\frac{q-1}{2}$ are co-prime, it follows that $\frac{q+1}{2}$ divides $|\beta(l)|$. Then the restriction on $|\beta(l)|$ that $0\leq |\beta(l)|\leq q$ implies $|\beta(l)|=0$ or $\frac{q+1}{2}$. Considering all the possibilities, we get $(|\alpha(l)|,|\beta(l)|,|\gamma(l)|)=\left(\frac{q+1}{2},\frac{q+1}{2},0\right)$ or $(q,0,1)$. This proves (ii).
\end{proof}

Recall that an {\it arc} in the projective plane $PG(2,q)$ is a nonempty set of points such that no three of them are contained in the same line. A line of $PG(2,q)$ is called {\it external} or {\it secant} with respect to a given arc according as it meets the arc in $0$ or $2$ points. If $q$ is odd, then the maximum size of an arc is $q+1$. An {\it oval} is an arc of size $q+1$.

\begin{corollary}\label{cor-arc}
Let $\pi$ be a secant plane. Then the following hold.
\begin{enumerate}
\item [(a)] The set $\gamma(\pi)$ is an arc in $\pi$.
\item [(b)] The lines of $\pi$, which are secant with respect to the arc $\gamma(\pi)$, are contained in $\mathcal{S}_\pi$.
\end{enumerate}
\end{corollary}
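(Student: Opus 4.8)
The plan is to deduce both parts directly from the line-classification already established in Lemma \ref{points-l}. The crucial observation is that, for \emph{every} line $l$ of the secant plane $\pi$, the four distributions listed in Lemma \ref{points-l}(i)--(ii) all satisfy $|\gamma(l)|\in\{0,1,2\}$. Since $\gamma(l)=l\cap\gamma(\pi)$ by definition, this means that no line of $\pi$ can meet $\gamma(\pi)$ in three or more points.

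For part (a), I would simply record this: because $|l\cap\gamma(\pi)|=|\gamma(l)|\le 2$ for every line $l$ of $\pi$, no three points of $\gamma(\pi)$ can be collinear. That is precisely the defining property of an arc, so $\gamma(\pi)$ is an arc in $\pi$ (it is nonempty, which one may check separately or note from the existence of lines in $\mathcal{S}_\pi$ forcing $\gamma$-points via case $(0,q,1)$ or $\left(\tfrac{q-1}{2},\tfrac{q-1}{2},2\right)$).

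For part (b), the idea is to track exactly which of the four distributions yields $|\gamma(l)|=2$. Scanning the list in Lemma \ref{points-l}, the value $|\gamma(l)|=2$ occurs in one and only one case, namely the first possibility of (i), where $l\in\mathcal{S}_\pi$ and $(|\alpha(l)|,|\beta(l)|,|\gamma(l)|)=\left(\tfrac{q-1}{2},\tfrac{q-1}{2},2\right)$; in every other case $|\gamma(l)|\le 1$. Consequently, a line $l$ that is secant to the arc $\gamma(\pi)$, meaning $|l\cap\gamma(\pi)|=|\gamma(l)|=2$, must fall into that single case, and therefore $l\in\mathcal{S}_\pi$. This is exactly assertion (b).

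Both statements are thus immediate consequences of Lemma \ref{points-l}, so I do not anticipate any substantive obstacle here; the entire content is the case inspection showing that $|\gamma(l)|\ge 3$ never arises and that $|\gamma(l)|=2$ forces $l\in\mathcal{S}_\pi$. The only minor point worth stating explicitly is the translation between the combinatorial data $|\gamma(l)|$ and the geometric notions ``tangent/secant to the arc $\gamma(\pi)$,'' after which both claims read off directly.
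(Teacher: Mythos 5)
Your proposal is correct and follows essentially the same route as the paper: both parts are read off from the case inspection in Lemma \ref{points-l}, noting that $|\gamma(l)|\le 2$ always (giving the arc property, with nonemptiness from lines of $\mathcal{S}_\pi$) and that $|\gamma(l)|=2$ occurs only in the case $l\in\mathcal{S}_\pi$. No gaps.
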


\begin{proof}
(a) Note that the set $\gamma(\pi)$ is nonempty. This follows from the fact that $|\gamma(l)|\geq 1$ for any line $l\in\mathcal{S}_\pi$ (Lemma \ref{points-l}(i)). If there is a line $l$ of $\pi$ containing at least three points of $\gamma(\pi)$, then $|\gamma(l)|$ would be at least $3$, which is not possible by Lemma \ref{points-l}.

(b) If $l$ is a line of $\pi$ containing two points of $\gamma(\pi)$, then $|\gamma(l)|=2$ and hence $l$ must be a line of $\mathcal{S}_\pi$, which follows from Lemma \ref{points-l}.
\end{proof}

\begin{lemma}\label{recall}
Let $\pi$ be a secant plane. If $|\gamma(\pi)|=k$, then $|\alpha(\pi)|=\frac{k(k-1)}{2}$.
\end{lemma}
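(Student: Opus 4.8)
The plan is to exploit the dichotomy recorded in Lemma \ref{points-l}(i): a line $l\in\mathcal{S}_\pi$ either has type $\left(\frac{q-1}{2},\frac{q-1}{2},2\right)$, in which case $l$ meets the arc $\gamma(\pi)$ in two points and carries $\frac{q-1}{2}$ points of $\alpha(\pi)$, or has type $(0,q,1)$, in which case $l$ is tangent to $\gamma(\pi)$ and contains \emph{no} point of $\alpha(\pi)$. The crucial observation is this last fact: since a line of $\mathcal{S}_\pi$ of the second type has $|\alpha(l)|=0$, every line of $\mathcal{S}_\pi$ passing through a point of $\alpha(\pi)$ must be of the first type, hence a secant line to the arc $\gamma(\pi)$.

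First I would record the count of secant lines to the arc. Since $\gamma(\pi)$ is an arc of size $k$ by Corollary \ref{cor-arc}(a), each pair of its points determines a distinct secant line, so there are exactly $\binom{k}{2}=\frac{k(k-1)}{2}$ lines secant to $\gamma(\pi)$. By Corollary \ref{cor-arc}(b) each such line lies in $\mathcal{S}_\pi$, and by Lemma \ref{points-l}(i) it therefore has type $\left(\frac{q-1}{2},\frac{q-1}{2},2\right)$, carrying precisely $\frac{q-1}{2}$ points of $\alpha(\pi)$.

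Then I would double-count the set of incident pairs
$$Y=\left\{(x,l): x\in\alpha(\pi),\ l\in\mathcal{S}_\pi \text{ secant to the arc } \gamma(\pi),\ x\in l\right\}.$$
Summing over the $\frac{k(k-1)}{2}$ secant lines, each contributing $\frac{q-1}{2}$ points of $\alpha(\pi)$, gives $|Y|=\frac{k(k-1)}{2}\cdot\frac{q-1}{2}$. Summing over the points of $\alpha(\pi)$ instead: each $x\in\alpha(\pi)$ lies on exactly $\frac{q-1}{2}$ lines of $\mathcal{S}_\pi$ by definition, and by the observation above all of these are secant to $\gamma(\pi)$, so $|Y|=|\alpha(\pi)|\cdot\frac{q-1}{2}$. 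Equating the two expressions and cancelling the common nonzero factor $\frac{q-1}{2}$ (which is positive since $q$ is odd, hence $q\geq 3$) yields $|\alpha(\pi)|=\frac{k(k-1)}{2}$.

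The step requiring the most care is establishing that every $\mathcal{S}_\pi$-line through a point of $\alpha(\pi)$ is secant to the arc, since this is exactly what makes the point-side count clean; it rests entirely on the fact from Lemma \ref{points-l}(i) that the tangent type $(0,q,1)$ has $|\alpha(l)|=0$. Once that is in place the remaining computation is a routine double count.
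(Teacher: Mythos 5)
Your proof is correct and is essentially the paper's own argument: both rest on the dichotomy of Lemma \ref{points-l}(i) (tangent-type lines of $\mathcal{S}_\pi$ carry no points of $\alpha(\pi)$, secant-type lines carry $\frac{q-1}{2}$ of them) and conclude by double-counting incidences between $\alpha(\pi)$ and the $\frac{k(k-1)}{2}$ arc-secant lines, then cancelling $\frac{q-1}{2}$. The only cosmetic difference is that you restrict the incidence set to arc-secant lines from the outset, while the paper counts over all of $\mathcal{S}_\pi$ and notes the tangent-type lines contribute zero.
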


\begin{proof}
By Lemma \ref{points-l}(i), each line of $\mathcal{S}_\pi$ contains either $1$ or $2$ points of $\gamma(\pi)$. Note that $|\gamma(l)|=1$ for any line $l$ of $\mathcal{S}_\pi$ which contains a unique point of $\gamma(\pi)$. For such a line $l$, we have $|\alpha(l)|=0$ (follows from Lemma \ref{points-l}(i)) and hence $l$ does not contain any point of $\alpha(\pi)$. Similarly, $|\gamma(l)|=2$ for any line $l$ of $\mathcal{S}_\pi$ which contains two points of $\gamma(\pi)$ and in that case, $l$ contains $\frac{q-1}{2}$ points of $\alpha(\pi)$. Counting the cardinality of the set $Y=\{(x,l):x\in \alpha(\pi),\ l\in\mathcal{S}_\pi\ \mbox{and}\ x\in l\}$, we get
$$|\alpha(\pi)|\times \frac{q-1}{2}=|Y|=0+\frac{k(k-1)}{2}\times \frac{q-1}{2}.$$
This gives $|\alpha(\pi)|=\frac{k(k-1)}{2}$.
\end{proof}

\begin{lemma}\label{lem-oval}
For any secant plane $\pi$, the set $\gamma(\pi)$ is an oval in $\pi$ and so $|\gamma(\pi|=q+1$. Further, $\mathcal{S}_\pi$ is precisely the set of lines of $\pi$ which are secant with respect to $\gamma(\pi)$.
\end{lemma}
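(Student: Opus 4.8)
The plan is to set $k=|\gamma(\pi)|$ and convert the two facts already in hand — that $\gamma(\pi)$ is an arc all of whose $\binom{k}{2}$ secant lines lie in $\mathcal{S}_\pi$ (Corollary \ref{cor-arc}), and that $|\alpha(\pi)|=\frac{k(k-1)}{2}$ (Lemma \ref{recall}) — into a single numerical identity that pins down $k$. By Lemma \ref{points-l}(i), every line of $\mathcal{S}_\pi$ meets $\gamma(\pi)$ in exactly $1$ or $2$ points, and those meeting it in $2$ points are precisely the secant lines of the arc; so $\mathcal{S}_\pi$ splits into $\binom{k}{2}$ secant lines of $\gamma(\pi)$ and $|\mathcal{S}_\pi|-\binom{k}{2}$ lines tangent to it.

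First I would record the partition $|\alpha(\pi)|+|\beta(\pi)|+|\gamma(\pi)|=q^2+q+1$, so that Lemma \ref{recall} gives $|\beta(\pi)|=q^2+q+1-k-\frac{k(k-1)}{2}$. Then I count the flags $(x,l)$ with $x\in\pi$, $l\in\mathcal{S}_\pi$ and $x\in l$ in two ways. Summing over lines yields $(q+1)|\mathcal{S}_\pi|=\frac{q(q+1)^2}{2}$, while summing over points (using the degrees $\frac{q-1}{2},\frac{q+1}{2},q$ on $\alpha(\pi),\beta(\pi),\gamma(\pi)$) yields $\frac{q-1}{2}|\alpha(\pi)|+\frac{q+1}{2}|\beta(\pi)|+q|\gamma(\pi)|$. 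Substituting the expressions for $|\alpha(\pi)|$ and $|\beta(\pi)|$ and simplifying, the identity reduces to $k(k-q)=q+1$, i.e. $k^2-qk-(q+1)=0$; its discriminant is the perfect square $(q+2)^2$, so its only positive solution is $k=q+1$. Hence $\gamma(\pi)$ is an arc of size $q+1$, which for $q$ odd is an oval.

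For the final assertion, once $k=q+1$ the number of secant lines of the oval $\gamma(\pi)$ is $\binom{q+1}{2}=\frac{q(q+1)}{2}=|\mathcal{S}_\pi|$. By Corollary \ref{cor-arc}(b) every such secant line already belongs to $\mathcal{S}_\pi$, and since their number equals $|\mathcal{S}_\pi|$ exactly, $\mathcal{S}_\pi$ must coincide with the set of secant lines of $\gamma(\pi)$; equivalently, no line tangent to the oval lies in $\mathcal{S}_\pi$.

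The routine-but-essential point, and the only place one must be careful, is the elimination of the spurious value of $k$. A naive incidence count that uses $\gamma(\pi)$ alone (ignoring Lemma \ref{recall}) gives $kq=\binom{k}{2}+\frac{q(q+1)}{2}$, which factors only as $(k-q)(k-q-1)=0$ and leaves $k=q$ open. It is the extra input $|\alpha(\pi)|=\frac{k(k-1)}{2}$ fed into the flag count above — or, equivalently, a separate double count of the incidences between $\beta(\pi)$ and $\mathcal{S}_\pi$, which forces $q^2+q+2=q(q+1)$ and hence a contradiction when $k=q$ — that rules out $k=q$ and isolates $k=q+1$. I expect this bookkeeping, namely selecting the count whose quadratic-in-$k$ contributions combine to leave $k=q+1$ as the unique admissible root, to be the main thing to get right; the rest is substitution.
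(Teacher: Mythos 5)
Your proposal is correct: the plane-wide flag count is valid, the algebra does reduce to $k(k-q)=q+1$, i.e.\ $k^2-qk-(q+1)=0$, whose only non-negative root is $k=q+1$, and your final identification of $\mathcal{S}_\pi$ with the secant lines of the oval is exactly the paper's own closing step. Where you differ from the paper is in which double count pins down $k$. The paper fixes a point $x\in\gamma(\pi)$ and counts $|\alpha(\pi)|$ along the pencil of $x$: by Lemma \ref{points-l}, the $k-1$ bisecants of the arc through $x$ each carry $\frac{q-1}{2}$ points of $\alpha(\pi)$, the $q-(k-1)$ lines of $\mathcal{S}_\pi$ meeting $\gamma(\pi)$ only in $x$ carry none, and the unique line through $x$ outside $\mathcal{S}_\pi$ carries $q$; equating $(k-1)\frac{q-1}{2}+q$ with $\frac{k(k-1)}{2}$ from Lemma \ref{recall} produces the very same quadratic $k^2-qk-(q+1)=0$. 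You instead count all incidences between points of $\pi$ and lines of $\mathcal{S}_\pi$, which requires only the degree definitions of $\alpha(\pi)$, $\beta(\pi)$, $\gamma(\pi)$, the partition $|\alpha(\pi)|+|\beta(\pi)|+|\gamma(\pi)|=q^2+q+1$, and Lemma \ref{recall} --- no fixed point and no line-by-line classification. Your version is more mechanical and slightly more economical (Lemma \ref{points-l} enters only through the already-proved Corollary \ref{cor-arc} and Lemma \ref{recall}); the paper's version is more local and geometric, making explicit how the pencil at a point of the arc is structured. Your closing remark is also accurate and worth keeping: a count against $\gamma(\pi)$ alone yields only $(k-q)(k-q-1)=0$, so it is precisely the input $|\alpha(\pi)|=\frac{k(k-1)}{2}$ that eliminates the spurious value $k=q$, in both your argument and the paper's.
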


\begin{proof}
Let $|\gamma(\pi)|=k\geq 1$. In order to prove that $\gamma(\pi)$ is an oval in $\pi$, it is enough to show that $k=q+1$ by Corollary \ref{cor-arc}(a). We have $|\alpha(\pi)|=\frac{k(k-1)}{2}$ by Lemma \ref{recall}.

Fix a point $x\in \gamma(\pi)$ and let $l_1,l_2,\ldots,l_{q+1}$ be the $q+1$ lines of $\pi$ through $x$. Note that there are $q$ lines of $\mathcal{S}_\pi$ through $x$. Since $\gamma(\pi)$ is a $k$-arc in $\pi$, there are $k-1$ lines of $\mathcal{S}_\pi$, say $l_1,l_2,\ldots,l_{k-1}$, through $x$ each of which contains two points of $\gamma(\pi)$. There are $q-(k-1)$ lines of $\mathcal{S}_\pi$, say $l_k,l_{k+1},\ldots, l_{q}$, each of which contains a unique point (namely, $x$) of $\gamma(\pi)$. The line $l_{q+1}$ through $x$ is not a line of $\mathcal{S}_\pi$ and contains one point (namely, $x$) of $\gamma(\pi)$.
Since $l_i\in \mathcal{S}_\pi$ for $1\leq i\leq q$, we have $|\gamma(l_i)|=2$ for $1\leq i\leq k-1$ and $|\gamma(l_{i})|=1$ for $k\leq i\leq q$. Then Lemma \ref{points-l}(i) implies that each of the lines $l_1,l_2,\ldots,l_{k-1}$ contains $\frac{q-1}{2}$ points of $\alpha(\pi)$ and each of the lines $l_k,l_{k+1},\ldots,l_{q}$ contains no point $\alpha(\pi)$. Since $l_{q+1}\notin \mathcal{S}_\pi$ and $|\gamma(l_{q+1})|=1$, Lemma \ref{points-l}(ii) implies that the line $l_{q+1}$ contains $q$ points of $\alpha(\pi)$. Therefore, we get $|\alpha(\pi)|=(k-1)\times \frac{q-1}{2}+ (q-(k-1))\times 0+ q$. Thus, we have  
$$(k-1)\times \frac{q-1}{2}+ q=\frac{k(k-1)}{2}.$$\\
On solving the above equation, we get $k=-1 $ or $q+1$. Since $k\geq 1$, we must have $k=q+1$.

Since $\gamma(\pi)$ is an oval in $\pi$, the number of lines of $\pi$ which are secant to $\gamma(\pi)$ is equal to $\frac{q(q+1)}{2}=|\mathcal{S}_\pi|$. Therefore, by Corollary \ref{cor-arc}(b), $\mathcal{S}_\pi$ is precisely the set of secant lines to $\gamma(\pi)$. This completes the proof.
\end{proof}

\begin{corollary}\label{size-alpha-beta}
Let $\pi$ be a secant plane. Then $|\alpha(\pi)|=\frac{q^2+q}{2}$ and $|\beta (\pi)|=\frac{q^2-q}{2}$.
\end{corollary}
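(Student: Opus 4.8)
The plan is to read off both cardinalities directly from the two lemmas just proved, together with the obvious fact that a plane of $PG(3,q)$ has exactly $q^2+q+1$ points. Since $\pi$ is a secant plane, every one of its points lies in $\frac{q-1}{2}$, $\frac{q+1}{2}$ or $q$ lines of $\mathcal{S}_\pi$, so the three sets $\alpha(\pi)$, $\beta(\pi)$ and $\gamma(\pi)$ partition the point set of $\pi$; hence
$$|\alpha(\pi)|+|\beta(\pi)|+|\gamma(\pi)|=q^2+q+1.$$

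First I would invoke Lemma \ref{lem-oval}, which tells us that $\gamma(\pi)$ is an oval, and in particular $|\gamma(\pi)|=q+1$. Setting $k=|\gamma(\pi)|=q+1$ in Lemma \ref{recall} then immediately yields
$$|\alpha(\pi)|=\frac{k(k-1)}{2}=\frac{(q+1)q}{2}=\frac{q^2+q}{2}.$$
This disposes of the first assertion without any further work.

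For the second assertion I would simply substitute the known values of $|\alpha(\pi)|$ and $|\gamma(\pi)|$ into the partition identity above and solve for $|\beta(\pi)|$:
$$|\beta(\pi)|=(q^2+q+1)-\frac{q^2+q}{2}-(q+1)=\frac{q^2-q}{2}.$$
Since both $\gamma(\pi)$ being an oval and the formula $|\alpha(\pi)|=\frac{k(k-1)}{2}$ are already established in the preceding lemmas, there is no genuine obstacle here; the only point requiring care is making sure the counting identity uses the correct total $q^2+q+1$ and that the three sets are genuinely disjoint, which is guaranteed because each point of $\pi$ lies in exactly one of the three permissible numbers of lines of $\mathcal{S}_\pi$. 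The entire argument is thus a one-line consequence of Lemmas \ref{recall} and \ref{lem-oval}.
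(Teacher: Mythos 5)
Your proof is correct and follows exactly the same route as the paper: invoke Lemma \ref{lem-oval} to get $|\gamma(\pi)|=q+1$, feed $k=q+1$ into Lemma \ref{recall} to obtain $|\alpha(\pi)|=\frac{q^2+q}{2}$, and then deduce $|\beta(\pi)|=\frac{q^2-q}{2}$ from the partition identity $|\alpha(\pi)|+|\beta(\pi)|+|\gamma(\pi)|=q^2+q+1$. No discrepancies; your only addition is spelling out why the three sets partition the plane, which the paper leaves implicit.
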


\begin{proof}
We have $|\gamma(\pi)|=q+1$ by Lemma \ref{lem-oval} and so $|\alpha(\pi)|=\frac{q^2+q}{2}$ by Lemma \ref{recall}. Since $|\alpha(\pi)|+|\beta(\pi)|+|\gamma(\pi)|=q^2+q+1$, it follows that $|\beta (\pi)|=\frac{q^2-q}{2}$.
\end{proof}

\section{Black points}

Recall that every point of $PG(3,q)$ is contained in $q^2$ or $\frac{q(q+1)}{2}$ lines of $\mathcal{S}$ by property (P1) and the black points are the ones which are contained in $q^2$ lines of $\mathcal{S}$.

\begin{lemma} \label{lem-black}
If $\pi$ is a secant plane, then the set of black points in $\pi$ is contained in the oval $\gamma(\pi)$.
\end{lemma}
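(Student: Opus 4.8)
The plan is to fix a black point $x$ lying in a secant plane $\pi$ and to study the $q+1$ lines through $x$ that do \emph{not} belong to $\mathcal{S}$; here I use that a black point lies on $q^2$ lines of $\mathcal{S}$, so that exactly $(q^2+q+1)-q^2=q+1$ of the lines through $x$ avoid $\mathcal{S}$. For each of the $q^2+q+1$ planes $\sigma$ through $x$ I would set $e_\sigma$ to be the number of lines through $x$ in $\sigma$ not lying in $\mathcal{S}$, and record its possible values from the earlier analysis. If $\sigma$ is tangent, then $x$ lies on $0$ or $q$ lines of $\mathcal{S}_\sigma$, so $e_\sigma\in\{q+1,1\}$, the value $q+1$ occurring exactly when $x=p_\sigma$ by Corollary \ref{cor-pole}; if $\sigma$ is secant, then $x\in\alpha(\sigma),\beta(\sigma),\gamma(\sigma)$ gives $e_\sigma=\tfrac{q+3}{2},\tfrac{q+1}{2},1$ respectively.

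Next I would record two double counts. Counting incidences between the $q+1$ lines through $x$ not in $\mathcal{S}$ and the planes through $x$, each such line lying in $q+1$ planes, gives $\sum_\sigma e_\sigma=(q+1)^2$. Counting instead pairs of such lines together with the unique plane they span gives $\sum_\sigma \binom{e_\sigma}{2}=\binom{q+1}{2}$. Writing $a,b$ for the numbers of secant planes through $x$ with $x\in\alpha(\sigma),\beta(\sigma)$ respectively, and $p$ for the number of tangent planes through $x$ having $x$ as pole, the second identity involves only $a,b,p$ because the planes with $e_\sigma=1$ contribute nothing; similarly, subtracting the trivial count $\sum_\sigma 1=q^2+q+1$ from the first identity eliminates the planes with $e_\sigma=1$ and again leaves a relation in $a,b,p$ only, namely $a\,\tfrac{q+1}{2}+b\,\tfrac{q-1}{2}+pq=q$.

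The heart of the argument is then purely arithmetic: the two resulting linear relations in $a,b,p$ combine to give $a=q(1-p)$, which already forces $p\le 1$; substituting $p=0$ (so $a=q$) back into the linear relation makes $b$ negative, since $q\ge 3$, a contradiction, so necessarily $p=1$ and then $a=b=0$. Thus $x$ lies in $\gamma(\sigma)$ for \emph{every} secant plane $\sigma$ through $x$; in particular $x\in\gamma(\pi)$, which is the assertion. I expect the only delicate point to be the bookkeeping: attaching the correct value of $e_\sigma$ to each plane type and position of $x$, and setting up the quadratic count $\sum_\sigma\binom{e_\sigma}{2}$ so that the planes with $e_\sigma=1$ drop out. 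This is essential, since it is precisely the interaction of the linear and the quadratic count that eliminates the stray possibility $p=0$; neither count alone suffices.
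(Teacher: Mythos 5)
Your proof is correct, and it takes a genuinely different route from the paper's. The paper argues by contradiction with a single pigeonhole count: assuming the black point $x$ lies in $\pi$ but not in $\gamma(\pi)$, it fixes a line $l\in\mathcal{S}_\pi$ through $x$ and distributes the $q^2$ lines of $\mathcal{S}$ through $x$ among the $q+1$ planes through $l$; the plane $\pi$ supplies at most $\frac{q+1}{2}$ of them and each of the other $q$ planes at most $q-1$ beyond $l$, giving at most $\frac{q+1}{2}+q(q-1)<q^2$ lines, a contradiction. You instead count the complement --- the $q+1$ lines through $x$ not in $\mathcal{S}$ --- against all $q^2+q+1$ planes through $x$, pairing a linear incidence count with a quadratic (pair) count. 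I checked your bookkeeping: the values $e_\sigma\in\{q+1,1\}$ (tangent $\sigma$, according as $x=p_\sigma$ or not, by Lemma \ref{size-a_pi} and Corollary \ref{cor-pole}) and $e_\sigma\in\left\{\frac{q+3}{2},\frac{q+1}{2},1\right\}$ (secant $\sigma$) are right; the reduced linear relation $a\frac{q+1}{2}+b\frac{q-1}{2}+pq=q$ and the pair count $a\frac{(q+3)(q+1)}{8}+b\frac{(q+1)(q-1)}{8}+p\frac{q(q+1)}{2}=\frac{q(q+1)}{2}$ do combine (multiply the latter by $\frac{8}{q+1}$ and subtract twice the former) to give $a=q(1-p)$, and the case analysis is as you say: $p=0$ forces $b=-q<0$, so $p=1$ and then $a=b=0$. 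Your longer route buys a sharper structural conclusion: in one stroke it shows that a black point lies in $\gamma(\sigma)$ for \emph{every} secant plane $\sigma$ through it, and moreover is the pole of exactly one tangent plane through it --- a fact the paper never isolates (and which would, for instance, immediately give Corollary \ref{coro-lem-black}). What the paper's route buys is brevity and robustness: one inequality, no linear system, and no need for the quadratic count; but both arguments are sound and rest only on material established before the lemma.
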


\begin{proof}
Let $x$ be a black point in $\pi$. Suppose that $x$ is not contained in $\gamma(\pi)$. Fix a line $l$ of $\mathcal{S}_\pi$ through $x$ and consider the $q+1$ planes of $PG(3,q)$ through $l$. There are $q^2$ lines of $\mathcal{S}$ through $x$ and each of them is contained in some plane through $l$. Since $x\notin\gamma(\pi)$, the plane $\pi$ contains at most $\frac{q+1}{2}$ lines of $\mathcal{S}$ through $x$. Each of the remaining $q$ planes through $l$ contains at most $q$ lines of $\mathcal{S}$ through $x$. This implies that there are at most $\frac{q+1}{2}+q(q-1)$ lines of $\mathcal{S}$ through $x$. This is not possible, as $\frac{q+1}{2}+q(q-1)< q^2$. So $x\in\gamma(\pi)$.
\end{proof}

\begin{corollary} \label{coro-lem-black}
Let $\pi$ be a secant plane and $x$ be a black point of $\pi$. Then there are exactly $q$ lines of $\pi$ through $x$ which are contained in $\mathcal{S}$.
\end{corollary}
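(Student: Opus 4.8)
The plan is to deduce this directly from Lemma \ref{lem-black} together with the definition of $\gamma(\pi)$, so the argument is very short. First I would apply Lemma \ref{lem-black} to the secant plane $\pi$ and the black point $x$: the lemma asserts that the set of black points in $\pi$ is contained in the oval $\gamma(\pi)$, and hence $x \in \gamma(\pi)$. This is the only substantive step, since a priori $x$ could have been a point of $\alpha(\pi)$ or $\beta(\pi)$.

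Next I would simply unwind the definitions. Recall that $\gamma(\pi)$ was defined as the set of points of $\pi$ lying on exactly $q$ lines of $\mathcal{S}_\pi$, while $\mathcal{S}_\pi$ is the set of lines of $\mathcal{S}$ contained in $\pi$. Therefore the lines of $\mathcal{S}_\pi$ through $x$ are precisely the lines of $\pi$ through $x$ that belong to $\mathcal{S}$, and membership $x\in\gamma(\pi)$ shows there are exactly $q$ of them. This yields the stated conclusion.

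I expect no genuine obstacle here; all the work is done by Lemma \ref{lem-black}, which excludes the possibility that a black point of $\pi$ lies in $\alpha(\pi)$ or $\beta(\pi)$ (where it would lie on only $\frac{q-1}{2}$ or $\frac{q+1}{2}$ lines of $\mathcal{S}_\pi$, contradicting that a black point lies on $q^2$ lines of $\mathcal{S}$ in total). Once $x$ is pinned to $\gamma(\pi)$, the count of $q$ secant-plane lines through $x$ is immediate from the definition.
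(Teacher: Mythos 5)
Your proof is correct and follows exactly the paper's argument: the paper likewise cites Lemma \ref{lem-black} to place $x$ in the oval $\gamma(\pi)$ and then concludes from the definition of $\gamma(\pi)$ that $x$ lies on precisely $q$ lines of $\mathcal{S}_\pi$. Your additional remark explaining why membership in $\alpha(\pi)$ or $\beta(\pi)$ is excluded is a faithful gloss on what Lemma \ref{lem-black} already provides, so nothing is missing.
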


\begin{proof}
This follows from the fact that $x$ is contained in the oval $\gamma(\pi)$ by Lemma \ref{lem-black}.
\end{proof}

\begin{lemma}\label{lem-black-secant}
The number of black points in a given secant plane is independent of that plane.
\end{lemma}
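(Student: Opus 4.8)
The plan is to show that every secant plane contains exactly $N/(q+1)$ black points, where $N$ denotes the total number of black points of $PG(3,q)$; since $N$ does not depend on any particular plane, this immediately yields the claim. The main tool throughout is Lemma \ref{lem-plane-point}, which equates the number of tangent planes through a line with the number of black points on that line, combined with averaging this equality over suitable pencils of lines.

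First I would record two preliminary counts. Let $T$ be the total number of tangent planes. Summing the identity of Lemma \ref{lem-plane-point} over all lines of $PG(3,q)$, the left side counts incident pairs (line, tangent plane containing it) and the right side counts pairs (line, black point on it); since each tangent plane and each black point is incident with exactly $q^2+q+1$ lines, this gives $T(q^2+q+1)=N(q^2+q+1)$, hence $T=N$. Next, fixing a point $z$ and summing Lemma \ref{lem-plane-point} over the $q^2+q+1$ lines through $z$, the left side becomes $(q+1)$ times the number of tangent planes through $z$, while the right side counts black points lying on lines through $z$. Because every black point other than $z$ lies on a unique such line, the right side equals $N$ if $z$ is not black and $q^2+q+N$ if $z$ is black. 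Hence the number of tangent planes through $z$ is $\frac{N}{q+1}$ when $z$ is not black and $q+\frac{N}{q+1}$ when $z$ is black; in particular it takes only these two values, differing by exactly $q$.

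The decisive step is a double count carried out inside a fixed secant plane $\pi$. I would count the incident pairs $(z,\rho)$ where $z$ is a point of $\pi$ and $\rho$ is a tangent plane through $z$. On one hand, since $\pi$ is secant it is not itself tangent, so every tangent plane $\rho$ meets $\pi$ in a line and therefore contributes its $q+1$ points; this gives $(q+1)T=(q+1)N$ pairs. On the other hand, splitting the points of $\pi$ according to the dichotomy above, if $\pi$ contains $b_\pi$ black points the count equals $b_\pi\left(q+\frac{N}{q+1}\right)+(q^2+q+1-b_\pi)\frac{N}{q+1}=b_\pi q+(q^2+q+1)\frac{N}{q+1}$. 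Equating the two expressions and simplifying, using $(q+1)^2-(q^2+q+1)=q$, collapses everything to $b_\pi=\frac{N}{q+1}$, which is independent of $\pi$.

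The step I expect to be the main obstacle is conceptual rather than computational: a direct double count of flags (black point, secant plane) only pins down the \emph{average} number of black points per secant plane, not the value in each individual plane. The argument circumvents this by counting incidences with tangent planes inside a single secant plane, exploiting two facts that hold uniformly over all planes, namely that the number of tangent planes through a point depends only on whether the point is black, and that every tangent plane cuts a secant plane in a full line; together these force the black-point count of $\pi$ to a fixed value. The one place where care is needed is in establishing that the tangent-plane count through a point is genuinely two-valued (and in particular that $q+1$ divides $N$, which the final formula presupposes), which is exactly where Lemma \ref{lem-plane-point} must be applied with attention to the incidence counts.
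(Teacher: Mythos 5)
Your proof is correct, and it follows a genuinely different route from the paper's. The paper counts $|\mathcal{S}|$ from the vantage point of the fixed secant plane $\pi$, splitting $\mathcal{S}$ into the lines contained in $\pi$ and the lines meeting $\pi$ in one point; this requires the structure theory of secant planes built up beforehand (the oval $\gamma(\pi)$ of Lemma \ref{lem-oval}, the sizes $|\alpha(\pi)|,|\beta(\pi)|$ of Corollary \ref{size-alpha-beta}, and Lemma \ref{lem-black} placing the black points of $\pi$ on $\gamma(\pi)$), and its payoff is the identity (\ref{eq-secant-black}) tying $\lambda$ to $|\mathcal{S}|$, which the paper needs later (with its tangent-plane analogue) to obtain $\mu=\lambda+q$. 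Your argument instead uses only Lemma \ref{lem-plane-point} plus elementary incidence counting in $PG(3,q)$, and all three steps check out: summing over all lines gives $T=N$; summing over the pencil of lines through a point $z$ gives $N/(q+1)$ or $q+N/(q+1)$ tangent planes through $z$ according as $z$ is non-black or black; and the flag count $(z,\rho)$ with $z\in\pi$ and $\rho$ a tangent plane through $z$ collapses, via $(q+1)^2-(q^2+q+1)=q$, to $b_\pi=N/(q+1)$. Your route is more elementary — it needs neither $q$ odd nor the secant-plane oval machinery of Section 2.2 — and it delivers as free byproducts the duality $T=N$ and the relation $|\mathcal{H}|=\lambda(q+1)$, which is exactly Lemma \ref{lem-size-H}, proved separately and later in the paper. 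What your proof does not produce is equation (\ref{eq-secant-black}); if it were substituted for the paper's proof, that identity would still have to be derived (e.g., by running the paper's count once constancy of $\lambda$ is known), since the subsequent deduction of $\mu=\lambda+q$ and of $|\mathcal{S}|$ rests on it. Finally, your worry about divisibility is handled correctly: $q+1\mid N$ follows from integrality of the tangent-plane count through any non-black point, and such a point exists by (P1).
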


\begin{proof}
Let $\pi$ be a secant plane and $\lambda_\pi$ denote the number of black points in $\pi$. We count the total number of lines of $\mathcal{S}$. The lines of $\mathcal{S}$ are divided into two types:
\begin{enumerate}
\item[(I)] the $\frac{q(q+1)}{2}$ lines of $\mathcal{S}$ which are contained in $\pi$,
\item[(II)] those lines of $\mathcal{S}$ which meet $\pi$ in a singleton.
\end{enumerate}
Let $\theta$ be the number of type (II) lines of $\mathcal{S}$. In order to calculate $\theta$, we divide the points of $\pi$ into four groups:
\begin{enumerate}
\item[(a)] The $\lambda_\pi$ black points contained in $\pi$: These points are contained in $\gamma(\pi)$ by Lemma \ref{lem-black}. Out of the $q^2$ lines of $\mathcal{S}$ through such a point, $q$ of them are contained in $\pi$.
\item[(b)] The $|\gamma(\pi)|-\lambda_\pi$ points of $\gamma(\pi)$ which are not black: Out of the $\frac{q(q+1)}{2}$ lines of $\mathcal{S}$ through such a point, $q$ of them are contained in $\pi$.
\item[(c)] The points of $\alpha(\pi)$: Out of the $\frac{q(q+1)}{2}$ lines of $\mathcal{S}$ through such a point, $\frac{q-1}{2}$ of them are contained in $\pi$.
\item[(d)] The points of $\beta(\pi)$: Out of the $\frac{q(q+1)}{2}$ lines of $\mathcal{S}$ through such a point, $\frac{q+1}{2}$ of them are contained in $\pi$.
\end{enumerate}
Using the values of $|\alpha(\pi)|$, $|\beta(\pi)|$ and $|\gamma(\pi)|$ obtained in Lemma \ref{lem-oval} and Corollary \ref{size-alpha-beta}, we get
\begin{align*}
\theta & = \lambda_\pi(q^2-q)+(q+1-\lambda_\pi)\left(\frac{q(q+1)}{2}-q\right)+|\alpha(\pi)|\left(\frac{q(q+1)}{2}-\frac{q-1}{2}\right)\\
 & \qquad\qquad +|\beta(\pi)|\left(\frac{q(q+1)}{2}-\frac{q+1}{2}\right) \\
 & = \lambda_\pi\left(\frac{q^2-q}{2}\right)+\frac{q^3(q+1)}{2}.
\end{align*}
Then $|\mathcal{S}| = \theta +\dfrac{q(q+1)}{2}= \lambda_\pi\left(\dfrac{q^2-q}{2}\right)+ \dfrac{q^4+q^3+q^2+q}{2}$.
Since $|\mathcal{S}|$ is a fixed number, it follows that $\lambda_\pi$ is independent of the secant plane $\pi$.
\end{proof}

By Lemma \ref{lem-black-secant}, we denote by $\lambda$ the number of black points in a secant plane. From the proof of Lemma \ref{lem-black-secant}, we thus have the following equation involving $\lambda$ and $|\mathcal{S}|$:
\begin{equation}\label{eq-secant-black}
\lambda\left(\frac{q^2-q}{2}\right)+\frac{q^4+q^3+q^2+q}{2}=|\mathcal{S}|.
\end{equation}
As a consequence of Lemma \ref{lem-black}, we have
\begin{corollary}\label{coro-black}
$\lambda\leq q+1$.
\end{corollary}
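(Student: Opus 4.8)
The plan is to read off the bound directly from the set-containment already established in Lemma \ref{lem-black}. Note first that by Lemma \ref{lem-black-secant} the quantity $\lambda$ is well-defined: it equals the number of black points in \emph{any} secant plane. So it suffices to fix one secant plane $\pi$ and bound the number of black points it contains.

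First I would invoke Lemma \ref{lem-black}, which guarantees that every black point lying in $\pi$ is in fact a point of the oval $\gamma(\pi)$; thus the set of black points of $\pi$ is a subset of $\gamma(\pi)$. Next I would recall from Lemma \ref{lem-oval} that $\gamma(\pi)$ is an oval of $\pi$ and hence has exactly $q+1$ points. Combining these two facts, the number $\lambda$ of black points in $\pi$ is at most $|\gamma(\pi)| = q+1$, which is precisely the assertion.

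There is essentially no obstacle to overcome here: the corollary is a pure counting consequence of a set inclusion, once the two ingredients (the containment of the black points in $\gamma(\pi)$ and the exact size $q+1$ of the oval) are in hand. The only point requiring a little care is to phrase the argument for a single fixed secant plane $\pi$ and to observe that the resulting bound on $\lambda$ is legitimate because $\lambda$ does not depend on the chosen plane, a fact already secured by Lemma \ref{lem-black-secant}.
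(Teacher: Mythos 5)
Your proof is correct and follows exactly the paper's route: the paper states the corollary as an immediate consequence of Lemma \ref{lem-black} (black points of a secant plane lie in $\gamma(\pi)$) combined with Lemma \ref{lem-oval} ($|\gamma(\pi)|=q+1$), which is precisely your argument. Your additional remark that $\lambda$ is well-defined via Lemma \ref{lem-black-secant} is a careful touch but does not change the substance.
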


\begin{lemma} \label{lem-black-tangent}
The number of black points in a given tangent plane is independent of that plane.
\end{lemma}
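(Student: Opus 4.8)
\section*{Proof proposal for Lemma~\ref{lem-black-tangent}}

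The plan is to mirror the double-counting argument used for Lemma~\ref{lem-black-secant}. Fix a tangent plane $\pi$ and let $\mu_\pi$ denote the number of black points contained in $\pi$. As before, I would count $|\mathcal{S}|$ by partitioning the lines of $\mathcal{S}$ into the lines of type (I) contained in $\pi$ (there are exactly $|\mathcal{S}_\pi|=q^2$ of these) and the lines of type (II) meeting $\pi$ in a single point. Writing $\theta$ for the number of type (II) lines, I would have $|\mathcal{S}|=q^2+\theta$, so it suffices to show that $\theta$ is an affine function of $\mu_\pi$ whose coefficients do not depend on $\pi$.

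To compute $\theta$, observe that each type (II) line meets $\pi$ in exactly one point, so
$$\theta=\sum_{x\in\pi}\Big(n(x)-m(x)\Big),$$
where $n(x)$ is the total number of lines of $\mathcal{S}$ through $x$ and $m(x)$ is the number of lines of $\mathcal{S}_\pi$ through $x$. By property (P1), $n(x)=q^2$ if $x$ is black and $n(x)=\frac{q(q+1)}{2}$ otherwise, so $\sum_{x\in\pi}n(x)=\mu_\pi q^2+(q^2+q+1-\mu_\pi)\frac{q(q+1)}{2}$. For the second sum I would use a plain incidence count: since every line of $\mathcal{S}_\pi$ carries $q+1$ points, $\sum_{x\in\pi}m(x)=|\mathcal{S}_\pi|(q+1)=q^2(q+1)$, independently of how the lines of $\mathcal{S}_\pi$ are distributed among the points of $\pi$. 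Substituting and simplifying should give $\theta=\mu_\pi\frac{q^2-q}{2}+\frac{q^4+q}{2}$, whence $|\mathcal{S}|=\mu_\pi\frac{q^2-q}{2}+\frac{q^4+2q^2+q}{2}$. Since $|\mathcal{S}|$ is a fixed number and $\frac{q^2-q}{2}\neq 0$ for $q>1$, this forces $\mu_\pi$ to be independent of $\pi$.

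The one point that requires care is the contribution of the pole $p_\pi$. In contrast to the secant-plane case, where Lemma~\ref{lem-black} pins down exactly which points carry how many lines of $\mathcal{S}_\pi$, here the pole lies on no line of $\mathcal{S}_\pi$ while every other point of $\pi$ lies on $q$ of them, and a priori the pole might or might not be black. I expect this to be the main obstacle if one tries to evaluate $\theta$ point-class by point-class, since one would first have to decide the status of the pole. The device that circumvents it is to compute $\sum_{x\in\pi} m(x)$ as a single incidence count $q^2(q+1)$ rather than splitting it according to black/non-black status; this makes the distribution of $\mathcal{S}_\pi$ among the points of $\pi$ (and in particular the status of the pole) irrelevant to the final expression, and the desired linearity of $\theta$ in $\mu_\pi$ then drops out cleanly. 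As a consistency check, comparing the resulting equation with \eqref{eq-secant-black} yields $\mu_\pi=\lambda+q$, which is the expected relation between the numbers of black points in tangent and secant planes.
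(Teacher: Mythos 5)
Your proof is correct, and it shares the paper's global skeleton: both arguments count $|\mathcal{S}|$ as $q^2+\theta$, where $\theta$ is the number of lines of $\mathcal{S}$ meeting $\pi$ in a single point, both arrive at $\theta=\mu_\pi\frac{q^2-q}{2}+\frac{q^4+q}{2}$, and both conclude from the constancy of $|\mathcal{S}|$ that $\mu_\pi$ cannot depend on $\pi$. The difference lies in how $\theta$ is evaluated, and your way is genuinely cleaner. The paper partitions the points of $\pi$ into black and non-black points, uses Lemma \ref{size-a_pi} to assert that every point of $\pi$ other than the pole lies on exactly $q$ lines of $\mathcal{S}_\pi$ while the pole lies on none, and is then forced into precisely the two-case analysis you anticipated (Case 1: $p_\pi$ black; Case 2: $p_\pi$ not black), checking in each case that $\theta$ comes out the same. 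Your observation that $\sum_{x\in\pi}m(x)$ can be computed in one stroke as the incidence count $|\mathcal{S}_\pi|(q+1)=q^2(q+1)$ makes the pole's status --- indeed the entire pointwise distribution of $\mathcal{S}_\pi$, and hence Lemma \ref{size-a_pi} itself --- irrelevant; only $|\mathcal{S}_\pi|=q^2$ from (P2) and the point counts from (P1) are used. The algebra does confirm your anticipated formula, reproducing the paper's equation (\ref{eq-tangent-black}), and your consistency check $\mu_\pi=\lambda+q$ is exactly the paper's equation (\ref{lambda-mu}). As a bonus, your device is more portable: applied verbatim to a secant plane it re-derives equation (\ref{eq-secant-black}) without needing Lemma \ref{lem-black}, the oval structure of $\gamma(\pi)$ from Lemma \ref{lem-oval}, or Corollary \ref{size-alpha-beta}, all of which the paper's proof of Lemma \ref{lem-black-secant} invokes just to pin down the per-point counts; what the paper's more explicit bookkeeping buys is a record of how the type (II) lines distribute over the point classes, but nothing later in the paper uses that extra information.
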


\begin{proof}
Let $\pi$ be a tangent plane with pole $p_\pi$ and $\mu_\pi$ be the number of black points in $\pi$. We shall apply a similar argument as in the proof of Lemma \ref{lem-black-secant} by calculating $|\mathcal{S}|$.
The lines of $\mathcal{S}$ are divided into two types: (I) the $q^2$ lines of $\mathcal{S}$ which are contained in $\pi$, and (II) those lines of $\mathcal{S}$ which meet $\pi$ in a singleton.
Let $\theta$ be the number of type (II) lines of $\mathcal{S}$. In order to calculate $\theta$, we divide the points of $\pi$ into two groups:
\begin{enumerate}
\item[(a)] The $\mu_\pi$ black points contained in $\pi$,
\item[(b)] The $q^2+q+1-\mu_\pi$ points of $\pi$ which are not black.
\end{enumerate}
If $x$ is a point of $\pi$ which is different from $p_\pi$, then Lemma \ref{size-a_pi} implies that the number of lines of $\mathcal{S}$ through $x$ which are not contained in $\pi$ is $q^2 -q$ or $\frac{q(q+1)}{2}-q$ according as $x$ is a black point or not. We consider two cases depending on $p_\pi$ is a black point or not.\medskip

\noindent {\bf Case-1:} $p_\pi$ is a black point. In this case, Lemma \ref{size-a_pi} implies that none of the $q^2$ lines of $\mathcal{S}$ through $p_\pi$ is contained in $\pi$. Then
\begin{align*}
\theta & = q^2+(\mu_\pi-1)(q^2-q)+(q^2+q+1-\mu_\pi)\left(\frac{q(q+1)}{2}-q\right)\\
  & = \mu_\pi\left(\frac{q^2-q}{2}\right)+\frac{q^4+q}{2}.
\end{align*}

\noindent {\bf Case-2:} $p_\pi$ is not a black point. In this case, none of the $\frac{q(q+1)}{2}$ lines of $\mathcal{S}$ through $p_\pi$ is contained in $\pi$ by Lemma \ref{size-a_pi}. Then
\begin{align*}
\theta & = \mu_\pi(q^2-q)+ \frac{q(q+1)}{2}+(q^2+q-\mu_\pi)\left(\frac{q(q+1)}{2}-q\right)\\
  & = \mu_\pi\left(\frac{q^2-q}{2}\right)+\frac{q^4+q}{2}.
\end{align*}
In both cases, $|\mathcal{S}| = \theta +q^2= \mu_\pi\left(\dfrac{q^2-q}{2}\right)+ \dfrac{q^4+2q^2+q}{2}$.
Since $|\mathcal{S}|$ is a fixed number, it follows that $\mu_\pi$ is independent of the tangent plane $\pi$.
\end{proof}

By Lemma \ref{lem-black-tangent}, we denote by $\mu$ the number of black points in a tangent plane. From the proof of Lemma \ref{lem-black-tangent}, we thus have the following equation involving $\mu$ and $|\mathcal{S}|$:
\begin{equation}\label{eq-tangent-black}
\mu\left(\frac{q^2-q}{2}\right)+\frac{q^4+2q^2+q}{2}=|\mathcal{S}|
\end{equation}
From equations (\ref{eq-secant-black}) and (\ref{eq-tangent-black}), we have
\begin{equation}\label{lambda-mu}
\mu=\lambda+q.
\end{equation}

\begin{lemma}\label{0-1-2-q+1}
The following hold:
\begin{enumerate}
\item[(i)] Every line of $PG(3,q)$ contains $0,1,2$ or $q+1$ black points.
\item[(ii)] If a line of $PG(3,q)$ contains exactly two black points, then it is a line of $\mathcal{S}$.
\end{enumerate}
\end{lemma}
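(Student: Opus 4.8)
The plan is to fix an arbitrary line $l$ of $PG(3,q)$ and let $b$ denote the number of black points lying on $l$, so that trivially $0\le b\le q+1$. The key preliminary observation is that, by Lemma \ref{lem-plane-point}, exactly $b$ of the $q+1$ planes through $l$ are tangent, hence $q+1-b$ of them are secant. In particular, as soon as $b\le q$ there is at least one secant plane containing $l$, and this single fact is the lever that drives both parts of the statement.

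For part (i), I would assume $b\le q$ and choose a secant plane $\pi$ with $l\subset\pi$. Every black point on $l$ is in particular a black point of $\pi$, so by Lemma \ref{lem-black} it lies on the oval $\gamma(\pi)$. Thus all $b$ black points of $l$ belong to $\gamma(\pi)\cap l$. Since $\gamma(\pi)$ is an oval, hence an arc, and $l$ is a line of $\pi$, we have $|\gamma(\pi)\cap l|\le 2$, which forces $b\le 2$. Together with the remaining case $b=q+1$ (which occurs exactly when all planes through $l$ are tangent), this shows that the only possibilities are $b\in\{0,1,2,q+1\}$.

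For part (ii), suppose $b=2$ with black points $x,y$ on $l$. Since $2\le q$, the secant plane $\pi\supseteq l$ from the previous step exists, and $x,y\in\gamma(\pi)$ by Lemma \ref{lem-black}. Hence $l$ is a line of $\pi$ meeting the oval $\gamma(\pi)$ in the two points $x,y$, i.e. a secant line to $\gamma(\pi)$. By Lemma \ref{lem-oval}, $\mathcal{S}_\pi$ is precisely the set of secant lines to $\gamma(\pi)$, so $l\in\mathcal{S}_\pi\subseteq\mathcal{S}$, as required.

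The only place requiring care, and really the crux of the argument, is guaranteeing a secant plane through $l$ whenever $b\le q$; this is exactly what Lemma \ref{lem-plane-point} provides by tying the number of tangent planes through $l$ to $b$. Once such a plane is in hand, the oval structure of $\gamma(\pi)$ furnished by Lemmas \ref{lem-black} and \ref{lem-oval} does all the remaining work, and no delicate counting is needed. I would only be careful to note that the bound $|\gamma(\pi)\cap l|\le 2$ uses nothing more than $\gamma(\pi)$ being an arc, and that it is the identification $\mathcal{S}_\pi=\{\text{secants to }\gamma(\pi)\}$ that upgrades the presence of two black points on $l$ into membership $l\in\mathcal{S}$.
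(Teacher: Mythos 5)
Your proposal is correct and follows essentially the same route as the paper: both rely on Lemma \ref{lem-plane-point} to produce a secant plane through $l$ when not all planes through $l$ are tangent, then use Lemma \ref{lem-black} together with the arc property of $\gamma(\pi)$ to cap the number of black points at $2$, and invoke the identification $\mathcal{S}_\pi=\{\text{secants to }\gamma(\pi)\}$ from Lemma \ref{lem-oval} for part (ii). The only difference is cosmetic: the paper phrases (i) as a contradiction starting from $b>2$, while you argue the contrapositive directly from $b\le q$.
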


\begin{proof}
Let $l$ be a line of $PG(3,q)$ and $b$ be the number of black points contained in $l$. Assume that $b>2$. If there exists a secant plane $\pi$ through $l$, then Lemma \ref{lem-black} implies that the line $l$ contains $b\geq 3$ number of points of the oval $\gamma(\pi)$ in $\pi$, which is not possible. So all planes through $l$ are tangent planes. Then all the $q+1$ points of $l$ are black by Lemma \ref{lem-plane-point}. This proves (i).

If $b=2$, then Lemma \ref{lem-plane-point} implies that there exists a secant plane $\pi$ through $l$. By Lemma \ref{lem-black}, $l$ is a secant line of $\pi$ with respect to the oval $\gamma(\pi)$. So $l$ is a line of $\mathcal{S}_{\pi}$ by the second part of Lemma \ref{lem-oval} and hence $l$ is a line of $\mathcal{S}$. This proves (ii).
\end{proof}

\section{Proof of Theorem \ref{q-odd}}

We shall continue with the notation used in the previous sections. We denote by $\mathcal{H}$ the set of all black points of $PG(3,q)$, and by $\mathcal{H}_\pi$ the set of black points of $PG(3,q)$ which are contained in a given plane $\pi$.

\begin{lemma} \label{lem-size-H}
$|\mathcal{H}|=\lambda(q+1)$. In particular, $|\mathcal{H}|\leq (q+1)^2$.
\end{lemma}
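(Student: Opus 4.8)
The plan is to pin down $|\mathcal{H}|$ by two double-counting arguments. The first step is to compute the total number $T$ of tangent planes in $PG(3,q)$, and here the key tool is Lemma \ref{lem-plane-point}, which says that for every line $l$ the number of tangent planes through $l$ equals the number of black points on $l$. I would count the incident pairs $(l,\pi)$ consisting of a line $l$ contained in a tangent plane $\pi$ in two ways: summing over tangent planes (each containing $q^2+q+1$ lines) gives $T(q^2+q+1)$, while summing over lines and invoking Lemma \ref{lem-plane-point} gives $\sum_l(\text{number of black points on }l)$. Rewriting the latter sum by counting through black points, and using that each point of $PG(3,q)$ lies on $q^2+q+1$ lines, yields $|\mathcal{H}|(q^2+q+1)$. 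Comparing the two expressions gives $T=|\mathcal{H}|$.

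For the second step I would count the incident pairs $(x,\pi)$ consisting of a black point $x$ lying on a plane $\pi$. Summing over black points, each of which lies on $q^2+q+1$ planes, gives $|\mathcal{H}|(q^2+q+1)$. Summing over planes, and using that a secant plane contains $\lambda$ black points while a tangent plane contains $\mu=\lambda+q$ black points (Lemmas \ref{lem-black-secant} and \ref{lem-black-tangent} together with equation (\ref{lambda-mu})), gives $(S+T)\lambda+Tq$, where $S+T=q^3+q^2+q+1=(q^2+1)(q+1)$ is the total number of planes. Substituting $T=|\mathcal{H}|$ from the first step, the term $|\mathcal{H}|q$ cancels and one is left with $|\mathcal{H}|(q^2+1)=(q^2+1)(q+1)\lambda$, so that $|\mathcal{H}|=\lambda(q+1)$. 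The bound $|\mathcal{H}|\leq(q+1)^2$ is then immediate from $\lambda\leq q+1$ (Corollary \ref{coro-black}).

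The argument is elementary once the right incidences are counted, and I expect no serious obstacle beyond bookkeeping. The one genuinely decisive step is the identity $T=|\mathcal{H}|$ obtained in the first count: without it, the black-point/plane incidence count in the second step would involve two unknowns, $|\mathcal{H}|$ and $T$, and could not be solved. The mild subtlety in the second count is ensuring that the $|\mathcal{H}|q$ contribution cancels exactly, leaving the clean factor $(q^2+1)$ that divides out to give the stated formula.
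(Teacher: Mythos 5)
Your proof is correct, and it takes a genuinely different route from the paper's. The paper argues locally and geometrically: fix a secant plane $\pi$ and a line $l$ of $\pi$ external to the oval $\gamma(\pi)$; by Lemma \ref{lem-black} the line $l$ contains no black point, so by Lemma \ref{lem-plane-point} every one of the $q+1$ planes through $l$ is a secant plane, and since these planes partition the points of $PG(3,q)$ off $l$, each contributing $\lambda$ black points (Lemma \ref{lem-black-secant}), the identity $|\mathcal{H}|=\lambda(q+1)$ falls out in three lines. You instead run two global double counts: pairing lines with the tangent planes containing them and applying Lemma \ref{lem-plane-point} line by line gives $T=|\mathcal{H}|$ (the number of tangent planes equals the number of black points) --- a point--plane duality fact the paper never isolates --- and then the black-point/plane incidence count, using $\mu=\lambda+q$ from equation (\ref{lambda-mu}), yields $|\mathcal{H}|(q^2+q+1)=(q^3+q^2+q+1)\lambda+|\mathcal{H}|q$, hence $|\mathcal{H}|(q^2+1)=(q^2+1)(q+1)\lambda$. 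Your arithmetic checks out: each point of $PG(3,q)$ lies on $q^2+q+1$ lines and on $q^2+q+1$ planes, each plane carries $q^2+q+1$ lines, there are $q^3+q^2+q+1=(q^2+1)(q+1)$ planes in total, and $q^2+q+1-q=q^2+1$. As for what each route buys: yours needs no geometric input for the main identity --- not the oval structure of secant planes, not the existence of external lines to $\gamma(\pi)$, not Lemma \ref{lem-black} --- only Lemma \ref{lem-plane-point}, the two constancy lemmas and equation (\ref{lambda-mu}); the paper's proof, by contrast, never touches tangent planes or $\mu$ at all and is considerably shorter, at the price of invoking the Section 2 machinery. Both arguments rely on Corollary \ref{coro-black} in exactly the same way for the final bound $|\mathcal{H}|\leq(q+1)^2$.
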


\begin{proof}
Fix a secant plane $\pi$. Let $l$ be a line of $\pi$ which is external to the oval $\gamma(\pi)$. By Lemma \ref{lem-black}, none of the points of $l$ is black. Then, by Lemma \ref{lem-plane-point}, each plane through $l$ is a secant plane. The number of black points contained in a secant plane is $\lambda$. Counting all the black points contained in the $q+1$ planes through $l$, we get $|\mathcal{H}|=\lambda(q+1)$. Since $\lambda\leq q+1$ by Corollary \ref{coro-black}, we have $|\mathcal{H}|\leq (q+1)^2$.
\end{proof}

The following result was proved by Bose and Burton in \cite[Theorem 1]{Bos-But}. We need it in the plane case.
\begin{proposition} \cite{Bos-But}\label{BB}
Let $B$ be a set of points of $PG(n,q)$ such that every line of $PG(n,q)$ meets $B$. Then $|B|\geq (q^n-1)/(q-1)$, and equality holds if and only if $B$ is a hyperplane of $PG(n,q)$.
\end{proposition}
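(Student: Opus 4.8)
The plan is to prove both the bound and the equality case by a single direct double count from a point lying outside $B$, with no induction on $n$ needed. First I would dispose of the trivial possibility $B = PG(n,q)$, where $|B| = (q^{n+1}-1)/(q-1)$ already exceeds the claimed bound; so I may assume there is a point $P \notin B$. The guiding observation is that the target quantity $(q^n-1)/(q-1)$ is exactly the number of lines of $PG(n,q)$ through a fixed point, and this is precisely what makes a count from $P$ tight. For the application in this paper only the case $n=2$ is required, where the bound reads $|B| \ge q+1$ and the extremal configurations are lines, but the same argument works verbatim for all $n$.

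For the lower bound I would consider the $(q^n-1)/(q-1)$ lines through $P$. Since every line of $PG(n,q)$ meets $B$ and $P \notin B$, each such line contains a point of $B$ distinct from $P$; and since two distinct lines through $P$ meet only in $P$, these $B$-points are pairwise distinct, one per line. Hence $|B| \ge (q^n-1)/(q-1)$, as claimed. The backward direction of the equality statement is immediate: a hyperplane has exactly $(q^n-1)/(q-1)$ points and every line either lies in it or meets it in a single point, so a hyperplane meets every line.

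For the forward direction of the equality case, suppose $|B| = (q^n-1)/(q-1)$. The crucial refinement is that the count above is now forced to be exact not merely for the chosen $P$ but for \emph{every} point $z \notin B$: the $(q^n-1)/(q-1)$ lines through such a $z$ each meet $B$, their meeting points are distinct, and there are exactly $|B|$ of them, so every line through $z$ meets $B$ in exactly one point. From this uniform property I would show that $B$ is closed under lines. Indeed, given distinct $A_1, A_2 \in B$, if the line $A_1A_2$ contained a point $z \notin B$, then that line would meet $B$ in at least the two points $A_1, A_2$ while passing through the external point $z$, contradicting the ``exactly one'' property. Hence $A_1A_2 \subseteq B$, so $B$ is a subspace of $PG(n,q)$; a subspace of size $(q^n-1)/(q-1)$ has dimension $n-1$ and is therefore a hyperplane.

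The main obstacle is the equality direction, and within it the step of upgrading the tight count from the single external point $P$ to every external point $z$. Without this uniformity one cannot run the line-closure argument, since the contradiction there hinges on the external point $z$ on the line $A_1A_2$ also seeing $B$ in exactly one point. Once the uniform ``exactly one point of $B$ on each line through any external point'' statement is secured, the deduction that $B$ is line-closed, hence a subspace, and then a hyperplane of the correct size, is short and purely formal.
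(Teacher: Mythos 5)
Your proof is correct, but note that the paper itself offers no proof of this proposition at all: it is quoted verbatim as Theorem~1 of Bose and Burton \cite{Bos-But} and used as a black box (only in the case $n=2$, in Lemma~\ref{exis-line}). So there is nothing internal to compare against; what you have done is supply a self-contained elementary proof of the cited result. Your argument is sound at every step: the lower bound follows from the injective assignment of points of $B$ to the $(q^n-1)/(q-1)$ lines through an external point; the equality case correctly upgrades this to the statement that \emph{every} external point $z$ sees exactly one point of $B$ on each line through it (since the sets $l\cap B$, for $l$ ranging over lines through $z$, are pairwise disjoint, nonempty, and cover $B$); and the line-closure argument then shows $B$ is a flat, hence a hyperplane by its cardinality. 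The only step you pass over quickly is the standard fact that a line-closed set of points in $PG(n,q)$ is a projective subspace; this is classical and easy (given three non-collinear points of such a set, any further point of their plane lies on a line meeting two sides of the triangle in distinct points), but a fully self-contained write-up would include it. In short: your proof is a legitimate replacement for the citation, which is arguably more than the paper's purposes require but is perfectly valid.
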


\begin{lemma}\label{exis-line}
If $\pi$ be a tangent plane, then $\mathcal{H}_\pi$ contains a line.
\end{lemma}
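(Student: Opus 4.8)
The plan is to recognise $\mathcal{H}_\pi$ as a blocking set of the plane $\pi\cong PG(2,q)$ and to derive a contradiction from the assumption that it contains no line. First I would invoke Corollary \ref{cor-plane}, which guarantees that every line of the tangent plane $\pi$ carries at least one black point; equivalently, $\mathcal{H}_\pi$ meets every line of $\pi$ and is therefore a blocking set of $\pi$. Applying Proposition \ref{BB} in the plane case ($n=2$) then yields $|\mathcal{H}_\pi|\geq \frac{q^2-1}{q-1}=q+1$, with equality exactly when $\mathcal{H}_\pi$ is a line.

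Next I would argue by contradiction. Suppose $\mathcal{H}_\pi$ contains no line. Take any line $l$ of $\pi$: by Lemma \ref{0-1-2-q+1}(i) it meets $\mathcal{H}_\pi$ in $0,1,2$ or $q+1$ points, but the value $0$ is ruled out because $\mathcal{H}_\pi$ is blocking and the value $q+1$ is ruled out by our assumption (such a line would lie inside $\mathcal{H}_\pi$). Hence every line of $\pi$ meets $\mathcal{H}_\pi$ in one or two points, so no three points of $\mathcal{H}_\pi$ are collinear and $\mathcal{H}_\pi$ is an arc of $\pi$. Since $q$ is odd, an arc has at most $q+1$ points, giving $|\mathcal{H}_\pi|\leq q+1$. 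Combined with the blocking bound this forces $|\mathcal{H}_\pi|=q+1$, and the equality clause of Proposition \ref{BB} then makes $\mathcal{H}_\pi$ a line of $\pi$ --- contradicting the assumption that $\mathcal{H}_\pi$ contains no line. Therefore $\mathcal{H}_\pi$ must contain a line.

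The routine parts are the citations of Corollary \ref{cor-plane}, Proposition \ref{BB} and Lemma \ref{0-1-2-q+1}; the one genuinely load-bearing step is the middle paragraph, where the trichotomy of Lemma \ref{0-1-2-q+1}(i) is combined with the blocking property to force $\mathcal{H}_\pi$ to be an arc. Once this reduction is in place, the size incompatibility between an arc and a blocking set (sharpened by the equality clause of Bose--Burton, which turns the forced oval-sized blocking set into an honest line) closes the argument immediately, so I expect no serious obstacle beyond stating that reduction cleanly. A small sanity check worth keeping in mind is that black points genuinely occur in a secant plane, so that $\lambda\geq 1$ and $\mu=\lambda+q\geq q+1$; this is consistent with the blocking bound above but is not actually needed, since Proposition \ref{BB} already supplies $|\mathcal{H}_\pi|\geq q+1$ directly.
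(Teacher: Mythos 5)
Your proof is correct and follows essentially the same route as the paper's: both establish that $\mathcal{H}_\pi$ is a blocking set via Corollary \ref{cor-plane}, apply the Bose--Burton bound of Proposition \ref{BB} with its equality clause, and use the trichotomy of Lemma \ref{0-1-2-q+1}(i) together with the bound $q+1$ on arc size for $q$ odd. The only difference is organizational --- the paper argues directly by cases on $|\mathcal{H}_\pi|$ ($=q+1$ gives a line; $>q+1$ rules out an arc and produces a fully black line), whereas you package the identical ingredients as a proof by contradiction --- so the two arguments are interchangeable.
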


\begin{proof}
By Corollary \ref{cor-plane}, every line of $\pi$ meets $\mathcal{H}_\pi$. By Proposition \ref{BB} (taking $n=2$), we have $|\mathcal{H}_\pi|\geq q+1$, and equality holds if and only if $\mathcal{H}_\pi$ itself is a line of $\pi$.

Therefore, assume that $|\mathcal{H}_\pi|> q+1$. Since $q$ is odd, the maximum size of an arc in $\pi$ is $q+1$. So $\mathcal{H}_\pi$ cannot be an arc and hence there exists a line $l$ of $\pi$ which contains at least three points of $\mathcal{H}_\pi$. Then all points of $l$ are black by Lemma \ref{0-1-2-q+1}(i) and so $l$ is contained in $ \mathcal{H}_\pi$.
\end{proof}

\begin{lemma}\label{to-use}
Let $\pi$ be a tangent plane. Then $\mathcal{H}_\pi$ is either a line or union of two (intersecting) lines.
\end{lemma}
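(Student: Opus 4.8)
The plan is to treat $\mathcal{H}_\pi$ as a point set in the plane $\pi$ constrained by strong intersection conditions, and to reduce everything to counting the black points off a fixed line. By Lemma~\ref{exis-line}, $\mathcal{H}_\pi$ contains a line $\ell$, and by Corollary~\ref{cor-plane} together with Lemma~\ref{0-1-2-q+1}(i), every line of $\pi$ meets $\mathcal{H}_\pi$ in exactly $1$, $2$, or $q+1$ points. Writing $T=\mathcal{H}_\pi\setminus\ell$ and $s=|T|$, the target reduces to showing $s\in\{0,q\}$: the case $s=0$ gives $\mathcal{H}_\pi=\ell$, while $s=q$ will be shown to force $\mathcal{H}_\pi=\ell\cup m$ for a second line $m$. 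First I would record the a priori bound $s\le q$, which follows since $|\mathcal{H}_\pi|=\mu=\lambda+q$ by (\ref{lambda-mu}) and $\lambda\le q+1$ by Corollary~\ref{coro-black}, whence $s=\mu-(q+1)\le q$.

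The key structural step is the observation that any line $m$ of $\pi$ joining two points of $T$ must be entirely black. Such a line meets $\ell$ in a point, so it already carries at least three black points, and Lemma~\ref{0-1-2-q+1}(i) then forces it to carry $q+1$; thus $m\subseteq\mathcal{H}_\pi$ and $|m\cap T|=q$. Consequently, if $s\ge 2$ then a single full line $m$ already contains $q$ points of $T$, giving $s\ge q$ and hence, with the bound above, $s=q$. This in turn forces $T=m\setminus\ell$, so that $\mathcal{H}_\pi=\ell\cup m$ is a union of two (necessarily intersecting) lines. It therefore remains only to exclude the value $s=1$.

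The hard part will be ruling out the configuration $\mathcal{H}_\pi=\ell\cup\{x\}$ for a single black point $x$ off $\ell$, since such a set does satisfy the $(1,2,q+1)$ intersection condition and cannot be discarded by the line-type count alone. Here I would invoke property (P2a) through Lemma~\ref{0-1-2-q+1}(ii): each of the $q+1$ lines of $\pi$ through $x$ meets $\ell$ in exactly one point, and its only black points are $x$ and that intersection point, so it contains precisely two black points and hence belongs to $\mathcal{S}$ by Lemma~\ref{0-1-2-q+1}(ii). This would place $x$ on all $q+1$ lines of $\pi$ through it as lines of $\mathcal{S}_\pi$, contradicting (P2a), which allows a point of a tangent plane to lie on only $0$ or $q$ lines of $\mathcal{S}_\pi$. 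Hence $s\ne 1$, and combining this with the structural step gives $s\in\{0,q\}$, so $\mathcal{H}_\pi$ is either the line $\ell$ or the union $\ell\cup m$ of two intersecting lines, as claimed.
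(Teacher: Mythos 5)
Your proof is correct, and it uses the same basic ingredients as the paper (Lemma \ref{exis-line}, Lemma \ref{0-1-2-q+1}, equation (\ref{lambda-mu}) with Corollary \ref{coro-black}), but it organizes them into a genuinely different argument. The paper simply asserts that Lemmas \ref{0-1-2-q+1}(i) and \ref{exis-line} leave only four possibilities for $\mathcal{H}_\pi$ (a line; a line plus an external point; two lines; the whole plane) and then kills the whole-plane case via $\lambda\le q+1$ and the line-plus-point case via the pole: it takes the line $t$ through $p_\pi$ and the extra point $x$, which is not in $\mathcal{S}$ by Corollary \ref{cor-pole} yet has exactly two black points and so is in $\mathcal{S}$ by Lemma \ref{0-1-2-q+1}(ii). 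Your version replaces the asserted case enumeration with an explicit derivation: the a priori bound $s=\mu-(q+1)\le q$ absorbs the whole-plane case before it arises, and the observation that any line joining two points of $T$ is forced to be fully black (three black points implies $q+1$) shows $s\ge 2$ implies $s=q$ and $\mathcal{H}_\pi=\ell\cup m$ --- a step the paper leaves implicit in its ``observe that there are only four possibilities.'' Finally, where the paper rules out the line-plus-point configuration with one carefully chosen line through the pole, you rule it out by showing \emph{all} $q+1$ lines of $\pi$ through $x$ would lie in $\mathcal{S}$, contradicting (P2a); this bypasses Corollary \ref{cor-pole} entirely (though (P2a) is of course the common source of both). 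The net effect is a proof that is slightly longer but more self-contained, since it justifies the structural dichotomy that the paper takes as evident.
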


\begin{proof}
Since $q\geq 3$, using Lemmas \ref{0-1-2-q+1}(i) and \ref{exis-line}, observe that there are only four possibilities for $\mathcal{H}_\pi$:
\begin{enumerate}
\item[(1)] $\mathcal{H}_\pi$ is a line.
\item[(2)] $\mathcal{H}_\pi$ is the union of a line $l$ and a point of $\pi$ not contained in $l$.
\item[(3)] $\mathcal{H}_\pi$ is the union of two (intersecting) lines.
\item[(4)] $\mathcal{H}_\pi$ is the whole plane $\pi$.
\end{enumerate}

We show that the possibilities (2) and (4) do not occur. If $\mathcal{H}_\pi$ is the whole plane $\pi$, then $\mu=q^2+q+1$ and so $\lambda=q^2+1$ by equation (\ref{lambda-mu}), which is not possible by Corollary \ref{coro-black}.

Now suppose that $\mathcal{H}_\pi$ is the union of a line $l$ and a point $x$ not on $l$. If $p_\pi\neq x$, then take $t$ to be the line through $p_\pi$ and $x$ (note that $p_\pi$ may or may not be on $l$). If $p_\pi=x$, then take $t$ to be any line through $p_\pi=x$. Since $\pi$ is a tangent plane, $t$ is not a line of $\mathcal{S}_\pi$ by Corollary \ref{cor-pole} and hence is not a line of $\mathcal{S}$. On the other hand, since $t$ contains only two black points (namely, the point $x$ and the intersection point of $l$ and $t$), $t$ is a line of $\mathcal{S}$ by Lemma \ref{0-1-2-q+1}(ii). This leads to a contradiction.
\end{proof}

\begin{lemma}
Let $\pi$ be a tangent plane. If $\mathcal{H}_\pi$ is a line of $PG(3,q)$, then the following hold:
\begin{enumerate}
\item[(i)] $\mathcal{H}_\pi$ is not a line of $\mathcal{S}$.
\item[(ii)] $\mathcal{H}_\pi=\mathcal{H}$.
\item[(iii)] $\mathcal{S}$ is a set of $\dfrac{q^4+q^3+2q^2}{2}$ lines of $PG(3,q)$ not containing the line $\mathcal{H}$.
\end{enumerate}
\end{lemma}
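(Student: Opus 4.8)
The plan is to reduce the entire lemma to the single computation $\lambda = 1$ and then to pin down the poles of the tangent planes through the line $\mathcal{H}_\pi$. First I would observe that if $\mathcal{H}_\pi$ is a line, then it carries exactly $q+1$ black points, so $\mu = q+1$; combining this with equation~(\ref{lambda-mu}) gives $\lambda = \mu - q = 1$. This single fact does most of the work. By Lemma~\ref{lem-size-H} we get $|\mathcal{H}| = \lambda(q+1) = q+1 = |\mathcal{H}_\pi|$, and since $\mathcal{H}_\pi \subseteq \mathcal{H}$ with equal cardinalities we conclude $\mathcal{H} = \mathcal{H}_\pi$, which is part~(ii). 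Likewise, substituting $\lambda = 1$ into equation~(\ref{eq-secant-black}) yields $|\mathcal{S}| = \frac{q^4+q^3+2q^2}{2}$, so the only remaining content of part~(iii) is the assertion that $\mathcal{S}$ does not contain the line $\mathcal{H}$, which is precisely part~(i).

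Next I would record the plane distribution forced by $\mathcal{H} = \mathcal{H}_\pi = l$. Since $l$ contains all $q+1$ black points, Lemma~\ref{lem-plane-point} gives $q+1$ tangent planes through $l$; as there are only $q+1$ planes through a line, every plane through $l$ is tangent. Any plane not containing $l$ meets $l$ in a single point and therefore contains exactly one black point, so it has $\lambda = 1$ black point and is a secant plane. Consequently the tangent planes through a fixed black point $x \in l$ are exactly the $q+1$ planes through $l$, while the remaining $q^2$ planes through $x$ are all secant.

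The heart of the argument, and the step I expect to be the main obstacle, is to locate the pole $p_\rho$ of each tangent plane $\rho$ through $l$. I would fix a black point $x \in l$ and double count the incident pairs $(m,\rho)$ with $m \in \mathcal{S}$ a line through $x$, $\rho$ a plane through $x$, and $m \subseteq \rho$. Each of the $q^2$ lines of $\mathcal{S}$ through $x$ lies in $q+1$ such planes, giving $q^2(q+1)$ pairs. On the other side, each of the $q^2$ secant planes through $x$ contributes $q$ lines of $\mathcal{S}$ through $x$ by Corollary~\ref{coro-lem-black}, while each tangent plane $\rho$ through $l$ contributes $q$ lines if $x \neq p_\rho$ and none if $x = p_\rho$ by Lemma~\ref{size-a_pi}. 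Writing $k$ for the number of tangent planes through $l$ whose pole is $x$, this gives $(q+1-k)q + q^3 = q^2(q+1)$, whence $k = 1$. Thus each of the $q+1$ black points is the pole of exactly one tangent plane through $l$; since there are exactly $q+1$ such planes, the poles are precisely the black points, and in particular every pole lies on $l$. Fixing any tangent plane $\rho$ through $l$, the line $l$ then passes through $p_\rho$, so $l \notin \mathcal{S}_\rho$ by Corollary~\ref{cor-pole}, and therefore $l = \mathcal{H} \notin \mathcal{S}$. This establishes~(i) and completes~(iii).
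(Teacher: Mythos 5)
Your proposal is correct, but it takes a genuinely different route from the paper's. The paper proves (i) first and by a short local contradiction: if $\mathcal{H}_\pi\in\mathcal{S}$, then Corollary~\ref{cor-pole} forces the pole $p_\pi$ off $\mathcal{H}_\pi$; picking a line $m$ of $\pi$ through $p_\pi$, which carries exactly one black point $x$, Lemma~\ref{lem-plane-point} makes $\pi$ the unique tangent plane through $m$, and summing the contributions of the $q+1$ planes through $m$ gives $q^2+q$ lines of $\mathcal{S}$ through $x$, contradicting (P1). The paper then gets (ii) by another local contradiction (a black point off $\mathcal{H}_\pi$ would span with it a tangent plane having at least $q+2$ black points, against Lemma~\ref{lem-black-tangent}), and (iii) from equation~(\ref{eq-tangent-black}) with $\mu=q+1$. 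You instead lead with the global counts: $\mu=q+1$ gives $\lambda=1$ via equation~(\ref{lambda-mu}), so Lemma~\ref{lem-size-H} yields $|\mathcal{H}|=q+1$ and (ii) follows by containment plus cardinality; then your double count over the pencil of planes through a black point $x\in l$ gives $k=1$, i.e.\ each black point is the pole of exactly one tangent plane through $l$, and the resulting bijection between the $q+1$ black points and the $q+1$ tangent planes through $l$ forces every such pole onto $l$, whence Corollary~\ref{cor-pole} delivers (i). Both arguments are sound and draw on the same toolkit (Lemmas~\ref{lem-plane-point}, \ref{size-a_pi}, Corollaries~\ref{cor-pole}, \ref{coro-lem-black}, and the two counting equations). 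Yours buys a stronger structural byproduct — the poles of the tangent planes through $\mathcal{H}$ are precisely the points of $\mathcal{H}$, and every plane not containing $\mathcal{H}$ is secant — at the cost of making (i) depend on (ii); the paper's proof of (i) is shorter, self-contained, and independent of (ii), which is why it can be stated first.
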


\begin{proof}
(i) Suppose that $\mathcal{H}_\pi$ is a line of $\mathcal{S}$. Then, by Corollary \ref{cor-pole}, the pole $p_\pi$ of $\pi$ must be a point of $\pi \setminus \mathcal{H}_\pi$. Fix a line $m$ of $\pi$ through $p_\pi$. Note that $m\notin \mathcal{S}$ again by Corollary \ref{cor-pole}. Let $x$ be the point of intersection of $m$ and $\mathcal{H}_\pi$. Since $m$ contains only one black point (which is $x$), Lemma \ref{lem-plane-point} implies that $m$ is contained in one tangent plane (namely, $\pi$) and $q$ secant planes. Since $x\neq p_\pi$, by Lemma \ref{size-a_pi}, there are $q$ lines of $\pi$ through $x$ which are contained in $\mathcal{S}$. In each of the $q$ secant planes through $m$, by Corollary \ref{coro-lem-black}, there are $q$ lines through $x$ which are contained in $\mathcal{S}$. Since $m\notin \mathcal{S}$, we get $q(q+1)=q^2+q$ lines of $\mathcal{S}$ through $x$, which is not possible by property (P1).\medskip

(ii) Suppose that $x$ is a black point which is not contained in $\mathcal{H}_\pi$. Let $\pi'$ be the plane generated by the line $\mathcal{H}_\pi$ and the point $x$. We have $\pi\neq \pi'$ as $x$ is not a black point of $\pi$. Each of the planes through the line $\mathcal{H}_\pi$ is a tangent plane by Lemma \ref{lem-plane-point}. In particular, $\pi'$ is a tangent plane. Note that $\pi$ contains $q+1$ black points, whereas $\pi'$ contains at least $q+2$ black points. This contradicts Lemma \ref{lem-black-tangent}.\medskip

(iii) The line $\mathcal{H}$ is not contained in $\mathcal{S}$ by (i) and (ii). Since the tangent plane $\pi$ contains $q+1$ black points, we have $\mu=q+1$. Then equation (\ref{eq-tangent-black}) gives that $|\mathcal{S}|= \dfrac{q^4+q^3+2q^2}{2}$.
\end{proof}

In the rest of this section, we assume that $\mathcal{H}_\pi$ is the union of two (intersecting) lines for every tangent plane $\pi$. So $\mu=2q+1$ and then equation (\ref{lambda-mu}) gives that $\lambda=q+1$. From equation (\ref{eq-tangent-black}) and Lemma \ref{lem-size-H}, we get

\begin{equation}\label{size-S-H}
|\mathcal{S}|=\frac{q^2(q+1)^2}{2} \;\;\;\; \ \mbox{and}  \;\;\;\; |\mathcal{H}|=(q+1)^2.
\end{equation}

\begin{lemma}\label{pole-inters}
Let $\pi$ be a tangent plane. If $\mathcal{H}_\pi$ is the union of the lines $l$ and $l'$ of $\pi$, then the pole $p_\pi$ of $\pi$ is the intersection point of $l$ and $l^\prime$.
\end{lemma}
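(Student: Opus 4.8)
The plan is to argue by contradiction, playing Corollary~\ref{cor-pole} (no line of $\pi$ through the pole lies in $\mathcal{S}_\pi$, hence in $\mathcal{S}$) against Lemma~\ref{0-1-2-q+1}(ii) (a line carrying exactly two black points must lie in $\mathcal{S}$). Set $z:=l\cap l'$ and suppose, for contradiction, that $p_\pi\neq z$. The key observation to exploit is that, since $\mathcal{H}_\pi=l\cup l'$, the black points of $\pi$ are exactly the points lying on $l$ or on $l'$; so the number of black points on any chosen line $m$ of $\pi$ is simply $|m\cap(l\cup l')|$, which I can read off from incidence in $\pi$.

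First I would record the common mechanism. Any line $m$ of $\pi$ through $p_\pi$ is not a line of $\mathcal{S}_\pi$ by Corollary~\ref{cor-pole}, and since it is contained in $\pi$ it is not a line of $\mathcal{S}$; by the contrapositive of Lemma~\ref{0-1-2-q+1}(ii), such an $m$ cannot contain exactly two black points. The whole proof then reduces to producing a single line through $p_\pi$ that \emph{does} carry exactly two black points.

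I would split into two cases according to whether $p_\pi$ is black. If $p_\pi\notin l\cup l'$, then among the $q+1$ lines of $\pi$ through $p_\pi$ exactly one passes through $z$ (namely $p_\pi z$), so I may pick one of the remaining $q$ lines $m$ through $p_\pi$ missing $z$; its intersections $m\cap l$ and $m\cap l'$ are then two distinct points (distinct precisely because $m$ avoids $z$), both different from $p_\pi$, and these are the only black points of $m$, giving exactly two and a contradiction. If instead $p_\pi\in l\cup l'$, say $p_\pi\in l$, then $p_\pi\neq z$ forces $p_\pi\notin l'$, and for any line $m\neq l$ through $p_\pi$ the black points of $m$ are precisely $p_\pi$ together with the single point $m\cap l'$, again exactly two, a contradiction. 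Either way the first paragraph is violated, so $p_\pi=z$.

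The case analysis is routine; the one point that needs care is checking that the constructed $m$ meets $l\cup l'$ in exactly two points, neither one nor three. This is exactly where the hypothesis $p_\pi\neq z$ enters: it keeps $m\cap l$ and $m\cap l'$ distinct in the first case (by forcing $m\neq p_\pi z$) and supplies $p_\pi\notin l'$ in the second. I expect this short incidence bookkeeping to be the only genuine subtlety, rather than a real obstacle.
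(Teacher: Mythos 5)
Your proof is correct and follows essentially the same route as the paper: assume $p_\pi\neq l\cap l'$, pick a line through $p_\pi$ avoiding $l\cap l'$, and play Corollary~\ref{cor-pole} (such a line is not in $\mathcal{S}$) against Lemma~\ref{0-1-2-q+1}(ii) (a line with exactly two black points must be in $\mathcal{S}$). The paper treats your two cases in one stroke, simply remarking that $p_\pi$ may or may not lie on $l\cup l'$, but the construction and the contradiction are identical.
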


\begin{proof}
Let $x$ be the intersection point of $l$ and $l^\prime$. Suppose that $p_\pi\neq x$. Let $t$ be a line of $\pi$ through $p_\pi$ which does not contain $x$ (note that $p_\pi$ may or may not be contained in $l\cup l'$). Since $\pi$ is a tangent plane, $t$ is not a line of $\mathcal{S}_\pi$ by Corollary \ref{cor-pole} and hence is not a line of $\mathcal{S}$. On the other hand, since $t$ contains two black points (namely, the two intersection points of $t$ with $l$ and $l'$), it is a line of $\mathcal{S}$ by Lemma \ref{0-1-2-q+1}(ii). This leads to a contradiction.
\end{proof}

We call a line of $PG(3,q)$ {\it black} if it is contained in $\mathcal{H}$.

\begin{lemma}\label{black-line-3}
Every black point is contained in at most two black lines.
\end{lemma}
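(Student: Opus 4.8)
The plan is to fix a black point $x$, write $k$ for the number of black lines through $x$, and prove $k\le 2$ by comparing two double counts carried out in the pencil of lines and the pencil of planes through $x$. First I would sort the $q^2+q+1$ lines through $x$ by their number of black points, which by Lemma \ref{0-1-2-q+1}(i) is $1$, $2$ or $q+1$ (at least $1$, since $x$ itself is black); call $s$ and $r$ the numbers carrying exactly two and exactly one black point, so that $k+s+r=q^2+q+1$. Because two distinct lines through $x$ meet only at $x$, counting $\mathcal H\setminus\{x\}$ along these lines gives $kq+s=|\mathcal H|-1=(q+1)^2-1$, and hence $s=q(q+2-k)$ and $r=(q-1)(k-1)$.

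Next I would count the planes through $x$. Let $T$ of them be tangent and $S=q^2+q+1-T$ secant. By the standing hypothesis each tangent plane $\pi\ni x$ has $\mathcal H_\pi$ equal to a union of two black lines meeting at $p_\pi$ (Lemma \ref{pole-inters}), so $x$ lies on one of them, and on both precisely when $x=p_\pi$. Writing $a$ and $b$ for the tangent planes through $x$ with $x=p_\pi$ and with $x\neq p_\pi$, a double count of the incidences (black line through $x$, tangent plane containing it) --- each black line lying in $q+1$ planes, all tangent by Lemma \ref{lem-plane-point} --- gives $2a+b=k(q+1)$. The essential point is the identity $a=\binom{k}{2}$: the plane spanned by two black lines through $x$ is tangent and, since any black line inside a tangent plane must coincide with one of the two lines forming $\mathcal H_\pi$, that plane has pole $x$ and $\mathcal H_\pi$ equal to the chosen pair; conversely every pole-$x$ tangent plane returns such a pair, and the correspondence is a bijection. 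This yields $T=a+b=k(q+1)-\binom{k}{2}$, and I expect verifying this bijection to be the main obstacle, the rest being bookkeeping.

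Finally I would double count the incidences (line through $x$, secant plane through $x$ containing it). By Lemma \ref{lem-plane-point} a line through $x$ with $q+1$, $2$ or $1$ black points lies in $0$, $q-1$ or $q$ secant planes, while each secant plane through $x$ carries $q+1$ lines through $x$; hence
\[
s(q-1)+rq=S(q+1).
\]
Substituting the values of $s$ and $r$ collapses the left-hand side to $q^3-q$, independently of $k$, whereas the right-hand side equals $(q+1)\bigl(q^2+q+1-T\bigr)$ with $T$ as above. Dividing by $q+1$ reduces everything to $T=2q+1$, i.e. to the quadratic $k^2-(2q+3)k+(4q+2)=0$, whose only roots are $k=2$ and $k=2q+1$. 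The root $k=2q+1$ is impossible, as it would require $kq=2q^2+q$ black points other than $x$, exceeding the available $(q+1)^2-1=q^2+2q$. Therefore $k=2$, so in particular $x$ lies on at most two black lines, as claimed.
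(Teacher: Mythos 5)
Your proof is correct, and it takes a genuinely different route from the paper's. The paper argues locally, by contradiction: if three distinct black lines $l,l_1,l_2$ pass through $x$, then $\pi=\langle l,l_1\rangle$ and $\pi'=\langle l,l_2\rangle$ are distinct tangent planes (every plane through a black line is tangent by Lemma \ref{lem-plane-point}), and by Lemma \ref{pole-inters} both have pole $x$; hence, by Corollary \ref{cor-pole}, no line of $\mathcal{S}$ through $x$ lies in $\pi\cup\pi'$, so every line of $\mathcal{S}$ through $x$ lies in one of the remaining $q-1$ planes through $l$, each of which is tangent and therefore carries at most $q$ lines of $\mathcal{S}$ through $x$ by (P2a); this caps the number of lines of $\mathcal{S}$ through $x$ at $q(q-1)<q^2$, contradicting that $x$ is black. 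Your argument is instead a global double count: it uses $|\mathcal{H}|=(q+1)^2$ from (\ref{size-S-H}) to get $s=q(q+2-k)$ and $r=(q-1)(k-1)$, the standing hypothesis plus Lemma \ref{pole-inters} to get $T=k(q+1)-\binom{k}{2}$, and Lemma \ref{lem-plane-point} to pin down $T=2q+1$, forcing $k\in\{2,2q+1\}$ and then $k=2$. All steps check out; in particular the bijection giving $a=\binom{k}{2}$ is sound, because a black line contained in a tangent plane $\pi$ lies in $\mathcal{H}_\pi$, and a full line cannot be covered by two other lines, so it must be one of the two components of $\mathcal{H}_\pi$. What your route buys is strictly more: it yields the exact value $k=2$, so it proves Lemma \ref{black-line-2} simultaneously (the paper needs a separate argument there, including an appeal to Corollary \ref{cor-plane-1} for existence, which your counting renders unnecessary). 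What it costs is length, more bookkeeping, and a dependence on the global counts (\ref{size-S-H}) that the paper's short local contradiction avoids.
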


\begin{proof}
Let $x$ be a black point. If possible, suppose that there are three distinct black lines $l,l_1,l_2$ each of which contains $x$. Let $\pi$ (respectively, $\pi'$) be the plane generated by $l,l_1$ (respectively, $l,l_2$). Each plane through $l$ is a tangent plane by Lemma \ref{lem-plane-point}. So $\pi$ and $\pi'$ are tangent planes. Since $\mathcal{H}_{\pi}=l\cup l_1$ and $\mathcal{H}_{\pi'}=l\cup l_2$, it follows that $\pi\neq \pi'$. By Lemma \ref{pole-inters}, $x$ is the pole of both $\pi$ and $\pi'$. So the lines through $x$ which are contained in $\pi$ or $\pi'$ are not lines of $\mathcal{S}$ by Corollary \ref{cor-pole}. Thus each line of $\mathcal{S}$ through $x$ is contained in some plane through $l$ which is different from both $\pi$ and $\pi'$. It follows that the number of lines of $\mathcal{S}$ through $x$ is at most $q(q-1)$, which contradicts to the fact that there are $q^2$ lines of $\mathcal{S}$ through $x$ (being a black point).
\end{proof}

\begin{lemma}\label{black-line-2}
Every black point is contained in precisely two black lines.
\end{lemma}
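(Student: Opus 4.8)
The plan is to establish the lower bound complementary to Lemma~\ref{black-line-3}. Since that lemma already gives that every black point lies on at most two black lines, it suffices to prove that each black point lies on \emph{at least} two black lines. Write $b(x)$ for the number of black lines through a fixed black point $x$; the goal is $b(x)\ge 2$. The main tool will be a double count of the set of incidences
$$\mathcal{P}(x)=\{(\pi,m):\pi\text{ a tangent plane through }x,\ m\text{ a black line of }\pi\text{ with }x\in m\}.$$

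First I would determine $T(x)$, the number of tangent planes through $x$, by counting pairs $(\pi,y)$ with $\pi$ a plane through $x$ and $y$ a black point of $\pi$, in two ways. Summing $|\mathcal{H}_\pi|$ over the $q^2+q+1$ planes through $x$, and using that $|\mathcal{H}_\pi|=\mu=2q+1$ for a tangent plane (under the standing assumption that $\mathcal{H}_\pi$ is a union of two intersecting lines) while $|\mathcal{H}_\pi|=\lambda=q+1$ for a secant plane, gives one expression in terms of $T(x)$. Grouping instead by the black point $y$ gives another: $x$ itself lies in all $q^2+q+1$ planes through $x$, whereas each of the remaining $q^2+2q$ black points (recall $|\mathcal{H}|=(q+1)^2$ from~(\ref{size-S-H})) lies in the $q+1$ planes through the line joining it to $x$. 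Equating the two counts and solving the resulting linear equation should yield $T(x)=2q+1$, independent of $x$.

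Next comes the key step, counting $|\mathcal{P}(x)|$. Counting by the second coordinate $m$: any black line $m$ through $x$ has all $q+1$ of its points black, so by Lemma~\ref{lem-plane-point} every one of the $q+1$ planes through $m$ is tangent, and each such plane $\pi$ yields a valid pair $(\pi,m)$; distinct black lines through $x$ give disjoint families of pairs, so $|\mathcal{P}(x)|=b(x)(q+1)$. Counting by the first coordinate $\pi$: for each tangent plane $\pi$ through $x$ we have $\mathcal{H}_\pi=l\cup l'$ with $x\in\mathcal{H}_\pi$, so $x$ lies on $l$ or $l'$ and $\pi$ contributes at least one pair; hence $|\mathcal{P}(x)|\ge T(x)=2q+1$.

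Combining the two counts gives $b(x)(q+1)\ge 2q+1>q+1$, so $b(x)\ge 2$, and together with Lemma~\ref{black-line-3} this forces $b(x)=2$, as required. (In fact $x$ is the pole of exactly one of the tangent planes through it, which turns the inequality into the exact equality $b(x)(q+1)=2q+2$; but only the inequality is needed.) I expect the main obstacle to be the bookkeeping that establishes $T(x)=2q+1$, and the careful verification that every plane through a black line is tangent so that the count by $m$ yields precisely $b(x)(q+1)$ with no over- or undercounting; once these are in place, the inequality $b(x)(q+1)\ge 2q+1$ delivers the conclusion immediately.
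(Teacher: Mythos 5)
Your proof is correct, but it takes a genuinely different route from the paper's. You argue globally: using $|\mathcal{H}|=(q+1)^2$ from (\ref{size-S-H}) and the constants $\mu=2q+1$, $\lambda=q+1$, you first double count pairs (plane through $x$, black point in it) to conclude that every black point $x$ lies on exactly $2q+1$ tangent planes, and then a second incidence count gives $b(x)(q+1)=|\mathcal{P}(x)|\ge 2q+1$, forcing $b(x)\ge 2$, which with Lemma \ref{black-line-3} yields $b(x)=2$. Both of your counts check out: indeed $T(x)\cdot q=(q^2+q+1)+(q^2+2q)(q+1)-(q^2+q+1)(q+1)=2q^2+q$, and all $q+1$ planes through a black line are tangent by Lemma \ref{lem-plane-point}, so each black line through $x$ contributes exactly $q+1$ pairs. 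The paper instead argues locally: it fixes one black line $l$ through $x$ (existence from Corollary \ref{cor-plane-1} plus the standing assumption), notes that the $q+1$ planes $\pi_1,\dots,\pi_{q+1}$ through $l$ are all tangent with $\mathcal{H}_{\pi_i}=l\cup l_i$, and uses Lemma \ref{black-line-3} to see that the points $p_i=l\cap l_i$ are pairwise distinct, hence exhaust $l$; so $x=p_j$ for some $j$ and lies on precisely the two black lines $l$ and $l_j$. The paper's argument needs neither $|\mathcal{H}|$ nor the exact values of $\mu$ and $\lambda$, and it exhibits the ``second ruling'' structure that is essentially reused in the proof of Lemma \ref{hyperbolic}; your argument carries more bookkeeping but yields as a by-product the structural fact that every black point lies on exactly $2q+1$ tangent planes (consistent with a point of a hyperbolic quadric), and your parenthetical claim that $x$ is the pole of exactly one tangent plane through it does follow, via Lemma \ref{pole-inters}, once $b(x)=2$ is known.
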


\begin{proof}
Let $x$ be a black point and $l$ be a black line containing $x$. The existence of such a line $l$ follows from the facts that $x$ is contained in a tangent plane (Corollary \ref {cor-plane-1}) and that the set of all black points in that tangent plane is a union of two black lines. By Lemma \ref{lem-plane-point}, let $\pi_1,\pi_2,\ldots,\pi_{q+1}$ be the $q+1$ tangent planes through $l$. For $1\leq i\leq q+1$, we have $\mathcal{H}_{\pi_i}=l\cup l_i$ for some black line $l_i$ of $\pi_i$ different from $l$. Let $\{p_i\}=l\cap l_i$. Lemma \ref{black-line-3} implies that $p_i\neq p_j$ for $1\leq i\neq j\leq q+1$, and so $l=\{p_1,p_2,\ldots, p_{q+1}\}$. Since $x\in l$, we have $x=p_j$ for some $1\leq j\leq q+1$. Thus, applying Lemma \ref{black-line-3} again, it follows that $x$ is contained in precisely two black lines, namely, $l$ and $l_j$.
\end{proof}

We refer to \cite{PT} for the basics on finite generalized quadrangles. Let $s$ and $t$ be positive integers. A {\it generalized quadrangle} of order $(s,t)$ is a point-line geometry $\mathcal{X}=(P,L)$ with point set $P$ and line set $L$ satisfying the following three axioms:
\begin{enumerate}
\item[(Q1)] Every line contains $s+1$ points and every point is contained in $t+1$ lines.
\item[(Q2)] Two distinct lines have at most one point in common (equivalently, two distinct points are contained in at most one line).
\item[(Q3)] For every point-line pair $(x,l)\in P\times L$ with $x\notin l$, there exists a unique line $m\in L$ containing $x$ and intersecting $l$.
\end{enumerate}

Let $\mathcal{X}=(P,L)$ be a generalized quadrangle of order $(s,t)$. Then, $|P|=(s+1)(st+1)$ and $|L|=(t+1)(st+1)$ \cite[1.2.1]{PT}. If $P$ is a subset of the point set of some projective space $PG(n,q)$, $L$ is a set of lines of $PG(n,q)$ and $P$ is the union of all lines in $L$, then $\mathcal{X}=(P,L)$ is called a {\it projective generalized quadrangle}. The points and the lines contained in a hyperbolic quadric in $PG(3,q)$ form a projective generalized quadrangle of order $(q,1)$. Conversely, any projective generalized quadrangle of order $(q,1)$ with ambient space $PG(3,q)$ is a hyperbolic quadric in $PG(3,q)$, this follows from \cite[4.4.8]{PT}.\\

The following two lemmas complete the proof of Theorem \ref{q-odd}.

\begin{lemma}\label{hyperbolic}
The points of $\mathcal{H}$ together with the black lines form a hyperbolic quadric in $PG(3,q)$.
\end{lemma}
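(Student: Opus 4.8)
The plan is to recognize the incidence structure $(\mathcal{H},\mathcal{L})$, where $\mathcal{L}$ denotes the set of all black lines, as a \emph{projective} generalized quadrangle of order $(q,1)$ whose ambient space is $PG(3,q)$, and then to invoke the cited converse \cite[4.4.8]{PT} to conclude that it must be a hyperbolic quadric. Thus the real content is the verification of the three axioms (Q1), (Q2), (Q3) together with the two ``projective/ambient'' conditions.

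Axioms (Q1) and (Q2) I expect to be immediate. A black line is a line of $PG(3,q)$ lying entirely in $\mathcal{H}$, so it carries exactly $q+1$ points of $\mathcal{H}$, giving $s+1=q+1$; and Lemma \ref{black-line-2} asserts that every black point lies on exactly two black lines, giving $t+1=2$. Axiom (Q2) is automatic, since two distinct lines of a projective space meet in at most one point. As a consistency check, a generalized quadrangle of order $(q,1)$ has $(s+1)(st+1)=(q+1)^2$ points, matching $|\mathcal{H}|=(q+1)^2$ from (\ref{size-S-H}).

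The crux — and the step I expect to be the main obstacle — is (Q3): given a black point $x$ and a black line $l$ with $x\notin l$, I must exhibit a unique black line through $x$ meeting $l$. For existence I would pass to the plane $\pi=\langle x,l\rangle$ spanned by $x$ and $l$. Since $l$ is a black line, Lemma \ref{lem-plane-point} forces every plane through $l$, in particular $\pi$, to be tangent, so under our standing assumption $\mathcal{H}_\pi$ is a union of two black lines. As $l\subseteq\mathcal{H}_\pi$, I may write $\mathcal{H}_\pi=l\cup l'$; then $x$, being a black point of $\pi$ off $l$, must lie on $l'$, and $l'$ meets $l$ in the pole of $\pi$ by Lemma \ref{pole-inters}. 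For uniqueness, any black line through $x$ meeting $l$ at a point $z$ is spanned by $x$ and $z\in l\subseteq\pi$, hence lies in $\pi$; being a black line of $\pi$ it must equal $l$ or $l'$ (no further line can fit inside the union $l\cup l'$ when $q\geq 3$), and since it passes through $x\notin l$ it can only be $l'$.

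With the axioms in place, $(\mathcal{H},\mathcal{L})$ is a generalized quadrangle of order $(q,1)$. It is projective because $\mathcal{H}$ is precisely the union of its black lines, again by Lemma \ref{black-line-2}, and its ambient space is all of $PG(3,q)$ because $|\mathcal{H}|=(q+1)^2>q^2+q+1$ prevents $\mathcal{H}$ from lying in any plane. The converse \cite[4.4.8]{PT} then identifies $(\mathcal{H},\mathcal{L})$ with a hyperbolic quadric in $PG(3,q)$, completing the proof. Apart from the coplanarity bookkeeping in (Q3), every ingredient is already supplied by the structure theory of tangent planes developed above.
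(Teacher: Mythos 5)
Your proposal is correct, and it shares the paper's overall frame: verify that $\mathcal{H}$ together with the black lines is a projective generalized quadrangle of order $(q,1)$, with (Q1) and (Q2) handled exactly as the paper does via Lemma \ref{black-line-2}, and then invoke \cite[4.4.8]{PT}. Where you genuinely diverge is the verification of (Q3). The paper argues globally: writing $l=\{x_1,\ldots,x_{q+1}\}$, it takes the second black line $l_i$ through each $x_i$, shows these are pairwise disjoint (three distinct black lines in one tangent plane would contradict the standing assumption), and then uses the count $|\mathcal{H}|=(q+1)^2$ from (\ref{size-S-H}) to conclude that the $l_i$ partition $\mathcal{H}$, so the point $x$ lies on exactly one of them; in effect it constructs the opposite ruling of the quadric as a by-product. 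You argue locally: the plane $\pi=\langle x,l\rangle$ is tangent because all planes through a black line are tangent (Lemma \ref{lem-plane-point}), the standing assumption gives $\mathcal{H}_\pi=l\cup l'$, existence follows from $x\in l'$, and uniqueness follows because any black line through $x$ meeting $l$ is spanned by two points of $\pi$, hence lies in $\mathcal{H}_\pi=l\cup l'$ and must be $l'$. Your route is shorter, needs neither Lemma \ref{pole-inters} nor the cardinality of $\mathcal{H}$ for (Q3) itself (you use $|\mathcal{H}|=(q+1)^2$ only to check that $\mathcal{H}$ spans $PG(3,q)$, an ambient-space condition the paper leaves implicit and which is a worthwhile addition), while the paper's route exhibits the spread of $q+1$ disjoint black lines covering $\mathcal{H}$. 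One step you should make explicit: to write $\mathcal{H}_\pi=l\cup l'$ you must argue that $l$ itself is one of the two lines in the standing assumption's decomposition; this follows from the same pigeonhole observation you use for uniqueness, namely that a line with $q+1\geq 3$ points contained in a union of two lines coincides with one of them.
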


\begin{proof}
We have $|\mathcal{H}|=(q+1)^2$ by (\ref{size-S-H}). It is enough to show that the points of $\mathcal{H}$ together with the black lines form a projective generalized quadrangle of order $(q,1)$.

Each black line contains $q+1$ points of $\mathcal{H}$. By Lemma \ref{black-line-2}, each point of $\mathcal{H}$ is contained in exactly two black lines. Thus the axiom (Q1) is satisfied with $s=q$ and $t=1$. Clearly, the axiom (Q2) is satisfied.

We verify the axiom (Q3). Let $l=\{x_1,x_2,\ldots,x_{q+1}\}$ be a black line and $x$ be a black point not contained in $l$. By Lemma \ref{black-line-2}, let $l_i$ be the second black line through $x_i$ (different from $l$) for $1\leq i\leq q+1$. If $l_i$ and $l_j$ intersect for $i\neq j$, then the tangent plane $\pi$ generated by $l_i$ and $l_j$ contains $l$ as well. This implies that $\mathcal{H}_{\pi}$ contains the union of three distinct black lines (namely, $l,l_i,l_j$), which is not possible. Thus the black lines $l_1,l_2,\ldots,l_{q+1}$ are pairwise disjoint. These $q+1$ black lines contain $(q+1)^2$ black points and hence their union must be equal to $\mathcal{H}$. In particular, $x$ is a point of $l_j$ for unique $j\in\{1,2,\ldots,q+1\}$. Then $l_j$ is the unique black line containing $x_j$ and intersecting $l$.

From the above two paragraphs, it follows that the points of $\mathcal{H}$ together with the black lines form a projective generalized quadrangle of order $(q,1)$. This completes the proof.
\end{proof}

\begin{lemma}
The lines of $\mathcal{S}$ are precisely the secant lines to the hyperbolic quadric $\mathcal{H}$.
\end{lemma}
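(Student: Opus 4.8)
The plan is to prove equality of the two line-sets by combining a cheap inclusion with a cardinality count. First I would recall from Lemma~\ref{hyperbolic} that, in the case under consideration, $\mathcal{H}$ is a hyperbolic quadric in $PG(3,q)$ whose point set is \emph{exactly} the set of black points. This identification is the crucial bridge: a line of $PG(3,q)$ is a secant line to the quadric $\mathcal{H}$ precisely when it meets $\mathcal{H}$ in two points, i.e. when it contains exactly two black points. Thus the set of secant lines to $\mathcal{H}$ coincides with the set of lines of $PG(3,q)$ carrying exactly two black points.

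With this reformulation, one inclusion is immediate. By Lemma~\ref{0-1-2-q+1}(ii), every line containing exactly two black points is a line of $\mathcal{S}$. Hence every secant line to $\mathcal{H}$ belongs to $\mathcal{S}$, so the set of secant lines to $\mathcal{H}$ is contained in $\mathcal{S}$. At this stage the substantive geometric content is already used up; what remains is only to rule out that $\mathcal{S}$ could contain additional (non-secant) lines.

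I would close the argument by counting. As recalled in the introduction, the number of secant lines of $PG(3,q)$ with respect to a hyperbolic quadric equals $q^2(q+1)^2/2$, while by (\ref{size-S-H}) we have $|\mathcal{S}|=q^2(q+1)^2/2$ as well. Since the set of secant lines to $\mathcal{H}$ is a subset of $\mathcal{S}$ having the same finite cardinality, the two sets must be equal, so $\mathcal{S}$ is precisely the family of secant lines to $\mathcal{H}$. There is no real obstacle here beyond the bookkeeping: all the difficulty was already absorbed into establishing that $\mathcal{H}$ is a hyperbolic quadric and that the black points are exactly its points, after which Lemma~\ref{0-1-2-q+1}(ii) supplies the inclusion and (\ref{size-S-H}) supplies the matching count. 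This, together with the preceding lemma, completes the proof of Theorem~\ref{q-odd}.
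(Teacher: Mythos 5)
Your proposal is correct and matches the paper's own argument essentially verbatim: both establish the inclusion of the secant lines to $\mathcal{H}$ into $\mathcal{S}$ via Lemma~\ref{0-1-2-q+1}(ii), and then conclude equality because $|\mathcal{S}|=q^2(q+1)^2/2$ from (\ref{size-S-H}) equals the number of secant lines to a hyperbolic quadric. No further comment is needed.
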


\begin{proof}
By (\ref{size-S-H}), we have $|\mathcal{S}|=\dfrac{q^2(q+1)^2}{2}$, which is equal to the number of secant lines to $\mathcal{H}$. It is enough to show that every secant line to $\mathcal{H}$  is a line of $\mathcal{S}$. This follows from Lemma \ref{0-1-2-q+1}(ii), as every secant line to $\mathcal{H}$ contains exactly two black points.
\end{proof}
\section*{Acknoledgments}
We thank Dr. Binod Kumar Sahoo for his helpful comments. The first author was supported by the Council of Scientific and Industrial Research grant No. 09/1002(0040)/2017-EMR-I, Ministry of Human Resource Development, Government of India and the second author was supported by Department of Science and Technology grant No. EMR/2016/006624, by the UGC Center for Advanced Studies and by National Board for Higher Mathematics grant No. 0204/18/2019/R\&D-II/10462, Department of Atomic Energy, Government of India.

\vskip .5cm

\noindent {\bf Addresses}:\\

\noindent {\bf Puspendu Pradhan} (Email: puspendu.pradhan@niser.ac.in)
\begin{enumerate}
\item[1)] School of Mathematical Sciences, National Institute of Science Education and Research (NISER), Bhubaneswar, P.O.- Jatni, District- Khurda, Odisha - 752050, India.

\item[2)] Homi Bhabha National Institute (HBNI), Training School Complex, Anushakti Nagar, Mumbai - 400094, India.\\
\end{enumerate}

\noindent {\bf Bikramaditya Sahu} (Email: bikramadityas@iisc.ac.in)
\begin{enumerate}
\item[1)] Department of Mathematics, Indian Institute of Science, Bangalore - 560012, India.\\
\end{enumerate}

\end{document}